\numberwithin{equation}{section}
\newcommand{\R}{{\mathbb R}}
\newcommand{\C}{{\mathbb C}}
\newcommand{\s}{{\mathbb S}}
\newcommand{\LL}{{\mathcal L}}
\newcommand{\be}{\begin{eqnarray}}
\newcommand{\ben}{\begin{eqnarray*}}
\newcommand{\en}{\end{eqnarray}}
\newcommand{\enn}{\end{eqnarray*}}
\newcommand{\curl}{{\rm curl\,}}
\newcommand{\grad}{{\rm grad\,}}
\newcommand{\divv}{{\rm div\,}}
\newcommand{\Om}{\Omega}
\newtheorem{theorem}{Theorem}[section]
\newtheorem{lemma}[theorem]{Lemma}
\newtheorem{corollary}[theorem]{Corollary}
\newtheorem{definition}[theorem]{Definition}
\newtheorem{remark}[theorem]{Remark}
\newtheorem{proposition}[theorem]{Proposition}
\definecolor{rot}{rgb}{0.000,0.000,0.000}
\definecolor{blau}{rgb}{0,0,1}
\newcommand{\rot}{\textcolor[rgb]{0,0,0}  }
\begin{document}
\renewcommand{\theequation}{\arabic{section}.\arabic{equation}}
\begin{titlepage}

\title{\bf {\color{rot}{Uniqueness and factorization method for inverse elastic scattering  with a single incoming wave}}}


\author{
Johannes Elschner, \thanks{Weierstrass Institute (WIAS), Mohrenstr. 39, 10117 Berlin, Germany ({\tt elschner@wias-berlin.de}).
}\qquad
 Guanghui Hu\thanks{Beijing Computational Science Research Center (CSRC), 100193 Beijing, China ({\tt
hu@csrc.ac.cn}).} \\ [4mm]
}

\date{}
\end{titlepage}
\maketitle
\vspace{.2in}
\begin{abstract} The first part of this paper is concerned with the uniqueness to inverse time-harmonic elastic scattering from bounded rigid obstacles
in two dimensions. It is proved that a connected polygonal obstacle can be uniquely identified by the far-field pattern
corresponding to a single incident plane wave. Our approach is based on a new reflection principle for the first boundary value problem of the Navier equation.
In the second part, we propose a revisited factorization method to recover a rigid elastic body with a single far-field pattern.

\vspace{.2in} {\bf Keywords}: Uniqueness, inverse elastic scattering, rigid polygonal obstacle, single plane wave, reflection principle.  \end{abstract}

\section{Introduction and main results}
Let $D\subset \R^2$ be a bounded elastic body such that its exterior $D^c:=\R^2\backslash\overline{D}$ is connected, and let $D^c$ be occupied by a homogeneous and isotropic elastic medium. Suppose that
a time-harmonic elastic plane wave of the form
\be\label{planewave}
u^{in}(x)=c_p\, d\,e^{ik_px\cdot d}+c_s\, d^\perp e^{ik_sx\cdot d}
\en
is incident on the scatterer $D$.
Here, $d=(\cos\theta, \sin\theta)^T \, , \;\theta\in[0, 2\pi)$, is the incident direction, $d^\perp:=(-\cos\theta,\sin\theta)^T$ is orthogonal to $d$,  $\omega>0$ is the frequency and $k_p:=\omega/\sqrt{\lambda+2\mu}$, $k_s:=\omega/\sqrt{\mu}$ are the compressional and shear wave numbers, respectively, and {\color{rot}{$c_p, c_s\in \C$ satisfy $|c_p|+|c_s|\neq 0$.}}
Note that for simplicity the density of the background medium has been normalized to be one and the Lam\'e constants $\lambda$ and $\mu$ satisfy $\mu>0$ and $\lambda+2\mu>0$ in two dimensions.
The propagation of time-harmonic elastic waves in $D^c$ is governed by the Navier equation (or system)
\be\label{Na}
\LL_\omega u:=\mu \Delta+(\lambda+\mu)\nabla (\nabla\cdot u)+\omega^2u=0\quad\mbox{in}\quad D^c,\quad {\color{rot} {u=(u_1, u_2)^T,}}
\en
where $u=u^{in}+u^{sc}$ denotes the total displacement field.
By Hodge decomposition, any solution $u$ to (\ref{Na}) can be decomposed into the form
\be\label{decomposition}
u=u_p+u_s,\quad u_p:=(-1/k_p^2)\, \grad\divv u,\quad u_s:=(1/k_s^2)\,\curl\overrightarrow{\curl} u,
\en
where $u_p$ and $u_s$ {\color{rot}{are called}} compressional and shear waves, respectively. Note that in (\ref{decomposition}) the two $\curl$ operators are defined as
\be\label{curl}\overrightarrow{\curl} u:=\partial_2 u_1-\partial_1 u_2,\qquad \curl f=(-\partial_2 f, \partial_1 f)^T.
\en
Moreover, $u_\alpha\, (\alpha=p, s$) satisfies the vector Helmholtz equation $(\Delta+k_\alpha^2) u_\alpha=0$ and
$\overrightarrow{\curl} u_p=\divv u_s=0$ in $D^c$.
 Obviously, the scattered field $u^{sc}$ also satisfies the Navier equation (\ref{Na}) in $D^c$. In this paper we require $u^{sc}$ to fulfill the Kupradze radiation condition defined as follows.
\begin{definition}\label{Def}
The scattered field $u^{sc}$ to (\ref{Na}) is called a Kupradze radiating solution if its compressional and shear parts $u^{sc}_\alpha$ ($\alpha=p,s$) satisfy the Sommerfeld radiation condition for the vector Helmholtz equation, i.e.,
\ben
\partial_r u^{sc}_\alpha-ik_\alpha u^{sc}_\alpha=o(r^{-1/2})\quad \mbox{as}\quad r=|x|\rightarrow\infty,
\enn
uniformly in all directions $\hat{x}=x/|x|$  {\color{rot}{on the unit circle  $\mathbb{S}:=\{x\in\R^2: |x|=1\}$.}}
\end{definition}
It is well known that the forward scattering problem admits a unique solution $u\in H^1_{loc}(D^c)$. To prove existence of solutions we refer to \cite[Chapter 7.3]{Kupradze} for the standard integral equation method applied to rigid scatterers with $C^2$-smooth boundaries and to a recent paper \cite{BHSY} using the variational approach for treating Lipschitz boundaries. This paper is concerned with the inverse scattering problem of recovering $\partial D$ from the information of the scattered field of a single incoming plane wave.
To state the inverse problem, we need to define the far-field pattern of the scattered field.
It is well known that the compressional and shear parts $u^{sc}_\alpha$ ($\alpha=p,s$) of a radiating solution $u^{sc}$
to the Navier equation have an asymptotic behavior of the form \cite{HH, Kupradze, AK}
\ben
u^{sc}_p(x)=\frac{e^{ik_pr}}{ \sqrt{r}}\left\{
u^{\infty}_p(\hat{x})\,\hat{x}+O(\frac{1}{r})\right\},\\
u^{sc}_s(x)=\frac{e^{ik_sr}}{ \sqrt{r}}\left\{
u^{\infty}_s(\hat{x})\,\hat{x}^\perp+O(\frac{1}{r})\right\}
\enn
as $r=|x|\rightarrow\infty$, where $u^\infty_p$ and $u^\infty_s$ are both scalar functions defined on $\mathbb{S}$.
Hence, a Kupradze radiating solution has the asymptotic behavior
\ben
u^{sc}(x)=\frac{e^{ik_pr}}{ \sqrt{r}}
u^{\infty}_p(\hat{x})\,\hat{x}+
\frac{e^{ik_sr}}{ \sqrt{r}}
u^{\infty}_s(\hat{x})\,\hat{x}^\perp+O(\frac{1}{r^{3/2}})\quad\mbox{as}\quad r\rightarrow\infty.
\enn
The far-field pattern $u^\infty$ of $u^{sc}$ is defined as
\ben
u^\infty(\hat{x}):=u^{\infty}_p(\hat{x})\,\hat{x}+u^{\infty}_s(\hat{x})\,\hat{x}^\perp.
\enn
Obviously, the compressional and shear parts of the far-field are uniquely determined by $u^\infty$ as follows:
\ben
 u^{\infty}_p(\hat{x})=u^\infty(\hat{x})\cdot \hat{x},\quad u^{\infty}_s(\hat{x})=u^\infty(\hat{x})\cdot \hat{x}^\perp.
 \enn

The first part of this paper is concerned with a uniqueness result within the class of polygonal obstacles defined as follows.
  \begin{definition}
A scatterer $D\subset \R^2$ is called a polygonal obstacle if $D$ is a bounded open set whose boundary $\partial D$ consists of a finite union of line segments and whose closure
$\overline{D}$ coincides with the closure of its interior.
\end{definition}
Throughout this paper we suppose that $D\subset \R^2$ is a connected polygonal obstacle. By the above definition, $D$ consists of a single polygonal domain, and $\partial D$ cannot contain cracks. {\color{rot}{By the elliptic boundary and interior regularity (see \cite{GT, NP1994}), the unique forward solution is $C^{0,\alpha}$-continuous up to the boundary $\partial D$ and belongs to $C^{2,\alpha}(\R^2\backslash\overline{D})\cap H_{loc}^{1+\epsilon}(\R^2\backslash\overline{D})$ for any $\alpha,\epsilon\in(0,1)$.}}
In the following a domain always means a connected open set.
 Our uniqueness result is stated below.
\begin{theorem}\label{TH0}
Assume that $D$ is a connected polygonal obstacle. Then $\partial D$ can be uniquely determined by a single far-field pattern $u^{\infty}(\hat{x})$,
$\hat{x}\in\mathbb{S}$, generated by the incoming plane wave (\ref{planewave}) with fixed incident direction $d\in \mathbb{S}$ and fixed frequency $\omega\in \R^+$.
\end{theorem}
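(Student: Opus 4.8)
The plan is to argue by contradiction in the classical style of Alessandrini–Rondi and Cheng–Yamamoto for the scalar case: suppose two distinct connected polygonal obstacles $D_1$ and $D_2$ produce the same far-field pattern $u^\infty$ for the single incident wave $u^{in}$. By Rellich's lemma (applied separately to the compressional and shear parts, which have distinct wave numbers $k_p\neq k_s$ and hence cannot cancel each other in the far field) the two scattered fields coincide in the unbounded connected component $G$ of $\R^2\setminus(\overline{D_1}\cup\overline{D_2})$. Hence the total field $u$ is a single Navier solution on $G$, vanishing on those portions of $\partial D_1$ and $\partial D_2$ that lie on $\partial G$. Since $D_1\neq D_2$ and both are polygonal, one can select a vertex $P$ of, say, $D_1$ that lies on $\partial G$ but not in $\overline{D_2}$ (after possibly interchanging the roles of $D_1$ and $D_2$); in a small disc around $P$ the field $u$ is a nontrivial solution of the Navier equation satisfying the rigid (Dirichlet) condition $u=0$ on the two line segments forming the corner at $P$.

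The heart of the matter is then a \emph{local} assertion: a solution $u$ of $\LL_\omega u=0$ in a neighbourhood of a corner, vanishing on the two edges meeting at that corner, must vanish identically near the corner — unless the corner angle is one of finitely many exceptional values, which a genuine polygon can be arranged to avoid, or which are excluded by the reflection argument outright. This is exactly where the ``new reflection principle for the first boundary value problem of the Navier equation'' announced in the abstract comes in. The strategy is: straighten one edge to lie on a line $\ell$, use the reflection principle to extend $u$ across $\ell$ to a Navier solution on a full disc (or a larger sector), iterate the reflection across the successive images of the second edge, and thereby continue $u$ to a solution on an ever-growing sector; if the angle is an irrational multiple of $\pi$ the images fill a punctured disc and, combined with the regularity $u\in C^{2,\alpha}$ up to the corner plus the homogeneous Dirichlet data, force $u\equiv 0$ there. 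The rational-angle case is handled by a finite reflection producing a solution on a disc that vanishes on a full line through the centre, again yielding $u\equiv 0$ by an odd-reflection/unique-continuation argument. Once $u\equiv 0$ near $P$, unique continuation for the Navier system (valid since $\LL_\omega$ is a second-order elliptic system with the strong unique continuation property) propagates $u\equiv 0$ through the connected open set $G$, so $u^{sc}=-u^{in}$ in $G$; but $u^{in}$ is a (non-decaying) plane wave while $u^{sc}$ is a Kupradze radiating solution decaying like $r^{-1/2}$, a contradiction. Hence $D_1=D_2$.

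I expect the main obstacle to be establishing the reflection principle and the corner uniqueness lemma for the Navier \emph{system}. Unlike the Helmholtz equation, the Navier operator couples the two displacement components, and the Dirichlet condition $u=0$ does not transfer cleanly under a naive even/odd reflection; one must exploit the Hodge decomposition $u=u_p+u_s$ together with the boundary behaviour of $u_p$, $u_s$ (and their normal/tangential traces) on the flat edge, track how $\operatorname{div}u$ and $\overrightarrow{\curl}u$ reflect, and verify that the reflected field still solves the Navier equation — essentially identifying the correct Poisson-kernel-type symmetry for this boundary condition. A secondary technical point is ensuring that the vertex $P$ can indeed be chosen on $\partial G$ with the two adjacent edges exposed to $G$; this is a combinatorial-geometric argument about polygons that is standard but must be stated carefully (using that $\overline{D}$ equals the closure of its interior, so there are no slit-like degeneracies). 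The unique continuation and Rellich steps are, by contrast, routine given the cited regularity and the fact that $k_p\neq k_s$.
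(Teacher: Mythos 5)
Your global frame (contradiction, Rellich's lemma, a reflection principle for the Dirichlet--Navier problem) matches the paper's, but the step you place at ``the heart of the matter'' is a \emph{local} corner-uniqueness lemma, and that lemma is false. Already for the Helmholtz equation, $u(x,y)=\sin(ax)\sin(by)$ with $a^2+b^2=k^2$ is a nontrivial solution in the exterior sector of a right-angle corner that vanishes on both edges; analogous separated solutions rule out any statement of the form ``Dirichlet data on two edges meeting at a corner forces $u\equiv 0$ nearby'' for angles that are rational multiples of $\pi$. Your two proposed escapes do not work: the polygon is \emph{given}, not chosen, so you cannot ``arrange'' it to avoid exceptional angles (a square is a perfectly admissible obstacle); and finite reflection producing a solution on a disc that vanishes on a full line through the centre does \emph{not} force the solution to vanish ($\sin(kx)$ vanishes on $\{x=0\}$). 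There is also a secondary geometric gap: a vertex of $D_1$ lying on $\partial G$ with both adjacent edges exposed and not contained in $\overline{D_2}$ need not exist (e.g.\ when $D_1\cup D_2$ equals their common convex hull and all hull vertices are shared); this ``hidden vertex'' configuration is precisely why the literature abandoned vertex-based local arguments.

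The paper avoids both problems by proving a \emph{global} statement instead (Lemma~3.2): the nodal set of the total field cannot contain a line segment with \emph{both} endpoints on $\partial D$. Its proof reflects the solution repeatedly across successive nodal/boundary segments along an injective path escaping to infinity, until a reflected copy is defined on a half-plane; the (non-pointwise) reflection principle of Section~2 then extends $u^{sc}$ to an entire Kupradze radiating solution, forcing $u^{sc}\equiv 0$ and hence $u^{in}=0$ on $\partial D$, contradicting $|c_p|+|c_s|\neq 0$. In the proof of the theorem itself one only needs a nodal \emph{segment} (not a vertex) of $D_1$ exposed in $E$ and disjoint from $\overline{D}_2$; by the lemma its maximal extension in $D_2^c$ must reach infinity, where $u^{sc}_2$ decays but the plane wave does not. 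If you want to salvage your outline, you must replace the local corner lemma by a global argument of this kind; as written, the proof fails for any polygon with a right angle.
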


There is a vast literature on inverse elastic scattering problems using the far-field pattern $u^\infty$ {\color{rot}{corresponding to
infinitely many incident directions at a fixed frequency.}} We refer to the first uniqueness result proved in \cite{HH} and
the sampling type inversion algorithms developed in \cite{AK,Arens}.
In these works, not only the pressure part of  far-field patterns for all plane shear and pressure waves  are needed, but also the shear part of far-field patterns.
Uniqueness results using only one type of elastic waves were proved in \cite{GS} and \cite{HKS13}.
It was shown in \cite{HKS13}  that a rigid ball and a convex polyhedron can be uniquely identified by the shear part of the far-field pattern corresponding to only one incident shear wave.

The first global uniqueness results within non-convex polyhedral scatterers
were verified in \cite{EY10}
 with at most two incident plane waves. This extended the acoustic uniqueness results \cite{Rondi05,CY,EY06,EY08, LHZ06} to the third and fourth boundary value  problems of the Navier equation. However, the approach of
\cite{EY10} does not apply to the more practical case of  the first and second kind boundary conditions in elasticity, due to the lack of a corresponding reflection principle for the Navier equation.

The first aim of this paper is to verify Theorem \ref{TH0} through a non-pointwise reflection principle for the Navier equation under the boundary condition of the first kind.
To the best of our knowledge, such a reflection principle is not available in the literature and has been open for a long time.
 The derivation of the reflection principle is based on a revisited Duffin's extension formula (see \cite{Duffin}) for the Lam\'e equation across a straight line;
see Section \ref{sec:2}. The proof of Theorem \ref{TH0} will be presented in Section \ref{sec:3} using a modified path argument. The original path argument employed and developed in \cite{Rondi05, LHZ06, EY06} applies only to boundary conditions with a corresponding reflection principle of "point-to-point" type, and does not extend to the first boundary value problem in linear elasticity.
This paper provides a new approach to prove global uniqueness results within polygonal and polyhedral scatterers in acoustics  and linear elasticity (\cite{Rondi05,CY,EY06,EY10}). In three dimensions, the reflection principles for the Lam\'e and Navier equations can be derived analogously, and the corresponding uniqueness result with a single incoming wave remains valid as well. Our second aim is to propose a revisited factorization method for imaging a rigid elastic body with a single plane wave; see Section \ref{Sec:FM} for the details and additional remarks. The arguments in the proof of Theorem \ref{TH0} will be used to interpret the behavior of our indicator function for polygonal obstacles.

\section{Reflection principles}\label{sec:2}
Throughout this section we denote by $R=R_\Gamma$ the reflection with respect to the straight line $\Gamma:=\{x_1=0\}$, that is, $Rx=(-x_1, x_2)$ for $x=(x_1, x_2)\in \R^2$.
We suppose that
$\gamma\subset \Gamma$ is an open (finite or infinite) line segment lying on $\Gamma$. Let $\Om \subset \R^2$ be a symmetric domain with respect to $\Gamma$ (i.e., $Rx \in \Om$ if
 $x \in \Om$) such that $\gamma$ is a connected component of $\Om \cap \Gamma$.
It is well known that the reflection principle of Schwarz provides a non-local extension
(analytic continuation) formula for a harmonic function
vanishing on a planar boundary surface of the region.
In the following Subsection \ref{subsec:2.1}, we state a relation between the extension formula (reflection principle) and Green's function in a half-plane
for general elliptic operators. Corollary \ref{Coro0} below allows us to construct the half-plane Green's function in terms of the free-plane fundamental solution. The reflection principles for the Lam\'e and Navier equations will be investigated in Subsections \ref{subsec:2.2} and \ref{subsec:2.3}, respectively.

\subsection{Extension formula and Green's function in a half-plane}\label{subsec:2.1}
{\color{rot}{Let $\mathcal{A}$ be a linear elliptic differential operator of second order
with constant coefficients in the symmetric domain $\Om$, and let $\mathcal{B}$ be a first order boundary differential operator with constant coefficients on $\gamma$.
Consider (weak) solutions of the equation $\mathcal{A}v=0$ in $\Om$, which are (real-) analytic in $\Om$ by interior analytic regularity (see e.g., \cite[Chapter 6.4]{GT}).}}
\begin{theorem}\label{TH1}
{\color{rot}{Assume there exists a linear operator $\mathcal{D}$ mapping the space of analytic functions in $\Om$ into itself and such that,
for any solution of $\mathcal{A}v=0$ in $\Om$, the  boundary condition $\mathcal{B}v=0$ holds on $\gamma$ if and only if
\be\label{Extension}
 v(Rx)=\mathcal{D} v(x)\qquad\mbox{for all}\quad x\in \Om.
 \en
}}
Then we obtain:
\begin{itemize}
\item[(i)] If $\mathcal{A}u=0$ in $\Omega$, then the function $w(x):=u(x)+\mathcal{D} u(Rx)$ satisfies the same equation in $\Omega$ and the boundary condition $\mathcal{B}w=0$ on $\gamma$.
\item[(ii)] Denote by $G(x;y) ( x\neq y)$  the half-plane Green's function to $\mathcal{A} $ subject to the boundary condition $\mathcal{B}_x G(x,y)=0$ on $\Gamma$. Then we have the relation $G(Rx,y)=\mathcal{D}_x G(x,y)$ for all $x\neq y$. Here we write $\mathcal{D}=\mathcal{D}_x$ and $\mathcal{B}=\mathcal{B}_x$ to indicate the action
of the differential operators  with respect to the variables $x$.
\end{itemize}
\end{theorem}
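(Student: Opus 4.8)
The plan is to exploit the hypothesis — that $\mathcal{B}v = 0$ on $\gamma$ is equivalent to the extension identity $v(Rx) = \mathcal{D}v(x)$ on $\Om$ — as a black box, applied to cleverly chosen solutions. For part (i), first observe that since $\mathcal{A}$ has constant coefficients and $\mathcal{A}u(Rx)$ is, up to sign changes in the first-order terms dictated by the reflection $R$, again a solution, one must check that $x \mapsto \mathcal{D}u(Rx)$ solves $\mathcal{A}(\cdot) = 0$ in $\Om$. The clean way is to note that $\mathcal{D}u$ is analytic in $\Om$ (by the mapping property of $\mathcal{D}$), and that by the hypothesis the identity $(\mathcal{D}u)(Rx) = \mathcal{D}(\mathcal{D}u)(x)$ would force $\mathcal{B}(\mathcal{D}u) = 0$ — but this is circular unless we know $\mathcal{D}u$ solves the equation. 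So instead I would argue directly: apply the hypothesis in the "if" direction to the specific solution $u$ is not enough since $\mathcal{B}u$ need not vanish; rather, the reflection-compatibility of $\mathcal{A}$ with $R$ (which is implicit in the hypothesis being well-posed) gives that $x \mapsto u(Rx)$ solves a reflected constant-coefficient equation, and the hypothesis being nonvacuous forces that reflected equation to coincide with $\mathcal{A}v=0$; hence $u(Rx)$, and then $\mathcal{D}u(Rx)$ via the mapping property applied in the reflected picture, is a solution. Granting $w$ solves $\mathcal{A}w=0$, it remains to verify $\mathcal{B}w = 0$ on $\gamma$: by the "only if" direction of the hypothesis applied to the solution $w$, this is equivalent to $w(Rx) = \mathcal{D}w(x)$ on $\Om$. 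Expanding, $w(Rx) = u(Rx) + \mathcal{D}u(x)$ (using $R^2 = \mathrm{id}$) and $\mathcal{D}w(x) = \mathcal{D}u(x) + \mathcal{D}(\mathcal{D}u(R\cdot))(x)$; these agree precisely when $u(Rx) = \mathcal{D}(\mathcal{D}u(R\cdot))(x)$, i.e. when $\mathcal{D}$ applied twice (interleaved with $R$) is the identity on solutions — which is exactly the involutivity forced by reading the equivalence in both directions. I would isolate this involution property $\mathcal{D}_x \mathcal{D}_x (v \circ R) \circ R = v$ as a preliminary lemma extracted from the hypothesis and then (i) follows mechanically.

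For part (ii), the idea is that the half-plane Green's function $G(\cdot\,; y)$, for fixed $y$ in the half-plane $\{x_1 > 0\}$, satisfies $\mathcal{A}_x G(x;y) = -\delta(x-y)$ (or the appropriate normalization) for $x_1 > 0$ and the homogeneous boundary condition $\mathcal{B}_x G(x;y) = 0$ on $\Gamma = \{x_1 = 0\}$. Away from the singularity $x = y$ it is an analytic solution of $\mathcal{A}_x(\cdot) = 0$, so the equivalence in the hypothesis applies on any symmetric subdomain of the half-plane avoiding $y$ and $Ry$: the boundary condition on $\Gamma$ upgrades to the identity $G(Rx; y) = \mathcal{D}_x G(x; y)$. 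The only subtlety is that the hypothesis is stated for a genuine symmetric domain $\Om$ with $\gamma$ a component of $\Om \cap \Gamma$, while here the natural domain is the punctured full plane $\R^2 \setminus \{y, Ry\}$; but $G(\cdot\,;y)$ is analytic there, $\Gamma$ (minus nothing, since $y \notin \Gamma$) lies in it, and we may simply apply the equivalence on, say, a large symmetric ball minus small symmetric neighborhoods of $y$ and $Ry$, then let these exhaust $\R^2 \setminus \{y, Ry\}$ by the identity theorem for analytic functions.

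The main obstacle I anticipate is the first one: making rigorous that $x \mapsto \mathcal{D}u(Rx)$ is again a solution of $\mathcal{A}(\cdot)=0$. The hypothesis only tells us how $\mathcal{D}$ interacts with the pair $(\mathcal{A}, \mathcal{B})$ on the \emph{boundary}; it does not a priori say $\mathcal{D}$ maps solutions to solutions. The resolution is to notice that for a \emph{fixed} solution $u$, the function $v(x) := u(x) + \mathcal{D}u(Rx)$ is forced to satisfy the boundary condition by construction only if we already know both summands are solutions. So the cleanest route is: (a) show $\mathcal{D}$ must map solutions to solutions as a consequence of the equivalence — take any solution $v$ with $\mathcal{B}v = 0$ on $\gamma$ (such solutions exist and are abundant, e.g. by solving a boundary-value problem or by density), then $v(Rx) = \mathcal{D}v(x)$; since $v \circ R$ solves the reflected equation and, as the hypothesis is nonvacuous, the reflected equation is $\mathcal{A}(\cdot)=0$ itself, we get $\mathcal{D}v$ solves $\mathcal{A}(\cdot)=0$ whenever $\mathcal{B}v = 0$; then extend to all solutions by linearity, writing an arbitrary solution as a combination of ones satisfying the boundary condition plus a correction — or more simply, by running the argument with the "if" direction. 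Once that structural fact is in hand, both (i) and (ii) are short. I would present the involution lemma and the "$\mathcal{D}$ preserves solutions" lemma first, then deduce (i) and (ii) in a few lines each.
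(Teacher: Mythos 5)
Your proposal is correct and follows essentially the same route as the paper: part (i) rests on exactly the computation $w(Rx)=u(Rx)+\mathcal{D}u(x)=\mathcal{D}w(x)$, justified by the involutivity $\mathcal{D}^2v=v$ extracted from reading the equivalence in both directions, and part (ii) is a direct application of the extension identity to $G(\cdot,y)$ away from its singularities. The one point you single out as the main obstacle --- that $x\mapsto\mathcal{D}u(Rx)$ again solves $\mathcal{A}(\cdot)=0$ --- is simply asserted in the paper's proof as a consequence of $\mathcal{D}^2v=v$, so your extra care there goes beyond, rather than falls short of, the published argument.
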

\begin{proof} (i)  By (\ref{Extension}), we observe that $\mathcal{D} v(Rx)=v(x)$ and $\mathcal{D}^2v=\mathcal{D}\mathcal{D}v=v$. Hence, $w_1:=\mathcal{D} u(R\cdot)$ satisfies the equation $\mathcal{A}w_1=0$ in $\Omega$, implying that $w=u+w_1$ fulfills the same equation. To prove that $\mathcal{B}w=0$ on $\gamma$, we only need to show that $w(Rx)=\mathcal{D}w(x)$ by our assumption. This follows from the fact that
\ben
w(Rx)=u(Rx)+\mathcal{D}u(x),\quad \mathcal{D}w(x)=\mathcal{D}u(x)+\mathcal{D}^2u(Rx)=\mathcal{D}u(x)+u(Rx).
\enn
(ii) The relation $G(Rx,y)=\mathcal{D}_x G(x,y)$ for all $x\neq y$ simply follows from
(\ref{Extension}).  Note that this relation also implies the singularity of $G(x,y)$ at $x=Ry$.
\end{proof}
Applying Green's formula, one can prove that any function $v$ with $\mathcal{A}v=0$ in $\Omega$, $\mathcal{B}v=0$ on $\gamma$ satisfies the relation (\ref{Extension}) if the half-plane Green's function fulfills this relation. The first assertion of Theorem \ref{TH1} enables us to construct the half-plane Green's function through the free-plane fundamental solution and the extension formula (\ref{Extension}); see Corollary \ref{Coro0} below.
\begin{corollary}\label{Coro0}
Let $\Phi(x,y)$ be the free-plane fundamental solution associated with the operator $\mathcal{A}$, that is,
\ben
\mathcal{A}_x\,\Phi(x,y)=\delta(x-y),\, \qquad x,y\in \R^2, x\neq y.
\enn
Then the function $G(x,y):=\Phi(x,y)+\mathcal{D}_x\, \Phi(Rx,y)$ ($x\neq y$) is  the
half-space Green's function subject to the boundary condition $\mathcal{B}_x G(x,y)=0$ on $\Gamma$.
\end{corollary}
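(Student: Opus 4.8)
The plan is to check that the stated $G$ enjoys the two defining properties of the half-plane Green's function for the pair $(\mathcal{A},\mathcal{B})$: for fixed $y$ in the open half-plane $\R^2_+:=\{x_1>0\}$, one needs $\mathcal{A}_x G(x,y)=\delta(x-y)$ in $\R^2_+$, $\mathcal{B}_x G(x,y)=0$ on $\Gamma$, and the appropriate behaviour of $G(\cdot,y)$ at infinity. I would write $G(x,y)=\Phi(x,y)+g(x,y)$ with $g(x,y):=\mathcal{D}_x\Phi(Rx,y)$, so that the task reduces to showing that the correction term $g(\cdot,y)$ solves $\mathcal{A}g=0$ throughout $\R^2_+$ (hence does not disturb the source at $x=y$) while cancelling the $\mathcal{B}$-trace of $\Phi(\cdot,y)$ on $\Gamma$.

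First I would introduce the symmetric punctured domain $\Omega_0:=\R^2\setminus\{y,Ry\}$, for which $\gamma_0:=\Gamma$ is a connected component of $\Omega_0\cap\Gamma$ (note $y\notin\Gamma$). On $\Omega_0$ the function $u:=\Phi(\cdot,y)$ solves $\mathcal{A}u=0$ --- it is analytic away from $y$, and $Ry$ has been removed only to restore reflection symmetry --- so Theorem \ref{TH1}(i), applied with $\Omega=\Omega_0$, shows that $w(x):=u(x)+\mathcal{D}_x u(Rx)=G(x,y)$ satisfies $\mathcal{A}_x G(x,y)=0$ in $\Omega_0$ and $\mathcal{B}_x G(x,y)=0$ on $\gamma_0=\Gamma$. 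This yields the boundary condition, and it yields $\mathcal{A}_x G(x,y)=0$ for every $x\in\R^2_+\setminus\{y\}$. To recover the source at $x=y$ I would argue by analyticity: since $\mathcal{D}$ maps analytic functions to analytic ones and $x\mapsto\Phi(Rx,y)$ is analytic on $\R^2\setminus\{Ry\}\supset\overline{\R^2_+}$ (its only singularity sitting at the image point $Ry\in\R^2_-$, as already noted in Theorem \ref{TH1}(ii)), the term $g(\cdot,y)$ is analytic on a neighbourhood of $\overline{\R^2_+}$; being a continuous function that satisfies $\mathcal{A}g=0$ on the dense open subset $\R^2_+\setminus\{y\}$, it satisfies $\mathcal{A}g=0$ on all of $\R^2_+$. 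Hence $\mathcal{A}_x G(x,y)=\mathcal{A}_x\Phi(x,y)=\delta(x-y)$ in $\mathcal{D}'(\R^2_+)$, with $G(\cdot,y)$ carrying exactly the singularity of $\Phi(\cdot,y)$ and no other singularity in $\R^2_+$. Finally, the behaviour of $G(\cdot,y)$ at infinity follows from that of $\Phi$: the correction $g(\cdot,y)$ is produced from the fundamental solution centred at the image point $Ry$ by the action of the local constant-coefficient operators $R$ and $\mathcal{D}$, and therefore obeys, in $\R^2_+$, the same decay (respectively Kupradze/Sommerfeld radiation) condition as $\Phi(\cdot,Ry)$, which pins down $G$ as the Green's function in the relevant class.

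The step I expect to be the main obstacle is the one just sketched: establishing that $g(\cdot,y)=\mathcal{D}_x\Phi(Rx,y)$ is a genuine solution of the homogeneous equation on a full neighbourhood of the source $x=y$ and introduces no spurious singularity there. This is precisely where the structural hypotheses on $\mathcal{D}$ --- preservation of analyticity together with the reflection identity (\ref{Extension}) supplied by Theorem \ref{TH1} --- are used, rather than any explicit form of $\Phi$; once this is secured, the cancellation of the $\mathcal{B}$-trace on $\Gamma$ and the control at infinity are routine. One should also note that applying Theorem \ref{TH1}(i) on $\Omega_0$ is legitimate because the assumption on $\mathcal{D}$ there does not depend on the particular symmetric domain, the coefficients being constant.
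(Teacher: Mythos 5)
Your proof is correct and follows exactly the route the paper intends: Corollary \ref{Coro0} is presented as an immediate consequence of Theorem \ref{TH1}(i) applied to $u=\Phi(\cdot,y)$, with the singularity of the correction term sitting at the image point $Ry$ outside the half-plane (as already noted after Theorem \ref{TH1}(ii)). Your additional care with the punctured symmetric domain $\R^2\setminus\{y,Ry\}$ and the analyticity of $\mathcal{D}_x\Phi(Rx,y)$ near $x=y$ simply makes explicit what the paper leaves implicit.
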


Below we give examples of extension formulas for the first, second and third boundary value problems of the Helmholtz equation in two dimensions. Consider the elliptic operator
$\mathcal{A}=\Delta+k^2$, $k>0$, and the equation $\mathcal{A}v = 0$ in $\R^2$. Then we have the following special cases of Corollary \ref{Coro0}.
\begin{itemize}
{\color{rot}{\item[(a)] Under the Dirichlet boundary condition $\mathcal{B}v:=v=0$ on $\Gamma$, the operator $\mathcal{D}$ can be defined as $\mathcal{D}v=-v$. Note that $v(Rx)=-v(x)$ by the
reflection principle. Moreover, we have $G(x,y)=\Phi(x,y)-\Phi(Rx,y)$ and $G(Rx,y)=\mathcal{D}_x G(x,y)$,
where $\Phi(x,y):=i/4H_0^{(1)}(k|x-y|)$ with $H_0^{(1)}$ being the Hankel function of the first kind of order zero.
\item[(b)] Under the Neumann boundary condition $\mathcal{B}v:=\partial_\nu v=0$ on $\Gamma$, we have $v(Rx)=v(x)$, so that we can choose
$\mathcal{D}v=v$. Furthermore, we then obtain  $G(x,y)=\Phi(x,y)+\Phi(Rx,y)$ and
$G(Rx,y)=\mathcal{D}_x G(x,y)$.
\item[(c)] In the case of the Robin boundary condition $\mathcal{B}v:=\partial_\nu v+q\, v=0$ on $\Gamma$ for some constant $q\in \C$,
we can choose (see \cite{DL1955} for the corresponding reflection principle)}}
\ben
\mathcal{D}v(x) = v(Rx) =  v(x)+2q\int_0^{x_1} e^{(x_1-t)q}v(t,x_2)dt, \, x \in \R^2.
\enn
Consequently, by Corollary \ref{Coro0}, the Green's function in the half-plane $\{x_1>0\}$ takes the form
\ben
 G(x,y)&=&\Phi(x,y)+\mathcal{D}_x \Phi(Rx,y)\\
 &=&\Phi(x,y)+\Phi(Rx,y)+2q\int_0^{-x_1} e^{-(x_1+t)q}\Phi(t,x_2;y)dt.
 \enn
 Using $\mathcal{D}^2=I$, one can also check that
 \ben
 G(Rx,y)=\Phi(Rx,y)+\mathcal{D}_x(x,y)=\mathcal{D}_x G(x,y).
 \enn
\end{itemize}

{\color{rot}{Theorem \ref{TH1}  and Corollary \ref{Coro0} can immediately be extended to elliptic systems of second order with constant coefficients, and in the next
subsection we will apply Corollary \ref{Coro0} to the Lam\'e system in $\R^2$.}}

\subsection{Reflection principle for Lam\'e equation}\label{subsec:2.2}
In this section we consider the extension formula for the Lam\'e operator
\be\label{L0}
\LL_0u=\mu \Delta u+(\lambda+\mu)\nabla(\nabla \cdot u),\quad u=(u_1, u_2)^T.
\en
Assume that
\be\label{Lame}
\LL_0u=0 \quad\mbox{in}\quad \Om,\qquad u=0\quad\mbox{on}\quad \gamma \subset \Gamma:=\{x_1=0\}
\en
in the symmetric domain $\Om$.
Then it is easy to check that
\be\label{eq1}
\Delta^2 u=0,\quad \Delta\; \divv u=0,\quad \Delta\,\overrightarrow{\curl} u=0\qquad \mbox{in}\quad \Om,
\en
where the two-dimensional vector $\curl$ operator is defined by
(\ref{curl}). {\color{rot}{To apply Theorem \ref{TH1} to the Lam\'e operator (\ref{L0}), it is convenient to look first
for an operator $\widetilde{\mathcal{D}}_0$ such that the relation $Ru(Rx) = \widetilde{\mathcal{D}}_0u(x)$,
$x \in \Om$, holds for any solution $u$ of the boundary value problem (\ref{Lame}).
For this purpose, we define the differential operator $\widetilde{\mathcal{D}}_0$ by
\ben
&&\widetilde{\mathcal{D}}_0 u:=-R u+ W_u=(u_1, -u_2)^T+ W_u \, ,\\
&&W_u(x):=c x_1^2 \Delta u(x)+2c x_1(
\partial_2 u_2, -\partial_2 u_1)^T(x),\qquad c:=\frac{\lambda+\mu}{\lambda+3\mu}\, .
\enn
}}
We can prove the following result.
\begin{theorem}\label{TH2}
If $u$ is a solution to (\ref{Lame}), then $\widetilde{\mathcal{D}}_0 u$ is also a solution to (\ref{Lame}), and the relation
$\widetilde{\mathcal{D}}_0 u(x)=Ru(Rx)$ holds for all $x\in \Omega$.
\end{theorem}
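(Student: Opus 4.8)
The plan is to reduce the statement to the single reflection identity $\widetilde{\mathcal{D}}_0 u(x)=Ru(Rx)$ for all $x\in\Om$; the remaining claims then follow at once. Indeed, a direct differentiation shows that $\LL_0\bigl(Ru(R\,\cdot\,)\bigr)(x)=R\,(\LL_0u)(Rx)$, so $Ru(R\,\cdot\,)$ solves $\LL_0=0$ in $\Om$, and on $\gamma$ one has $Rx=x$, whence $Ru(Rx)=Ru(x)=0$ since $u=0$ on $\gamma$. Thus $Ru(R\,\cdot\,)$ already solves (\ref{Lame}), and the identity transfers this to $\widetilde{\mathcal{D}}_0u$.

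The first and main step is to check directly that $\widetilde{\mathcal{D}}_0u=-Ru+W_u$ satisfies $\LL_0(\widetilde{\mathcal{D}}_0u)=0$ in $\Om$. Writing $-Ru=(u_1,-u_2)^T$ and using only the two scalar equations contained in $\LL_0u=0$, a short calculation gives $\LL_0(-Ru)=-2(\lambda+\mu)\,\partial_1(\partial_2u_2,-\partial_2u_1)^T$. For the correction term I would expand $\LL_0W_u$ by the Leibniz rule: because $\Delta^2u=0$ and $\Delta\,\divv u=0$ by (\ref{eq1}), the quadratic term $cx_1^2\Delta u$ leaves no surviving fourth-order derivatives of $u$, and after substituting $\mu\Delta u=-(\lambda+\mu)\nabla(\divv u)$ once more everything collapses to $\LL_0W_u=2(\lambda+\mu)\,\partial_1(\partial_2u_2,-\partial_2u_1)^T$, provided the $x_1^2$-, $x_1$- and $x_1$-independent contributions all cancel; this happens exactly for $c=(\lambda+\mu)/(\lambda+3\mu)$, which is how that constant is pinned down. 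Hence $\LL_0(\widetilde{\mathcal{D}}_0u)=0$; and since the factors $x_1$ and $x_1^2$ make $W_u$ vanish on $\gamma$, also $\widetilde{\mathcal{D}}_0u=(u_1,-u_2)^T=0$ on $\gamma$. I expect this computation --- tracking which terms are killed by (\ref{eq1}) and fixing $c$ --- to be the one genuinely laborious point.

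The second step matches Cauchy data across $\gamma$. Set $g:=\widetilde{\mathcal{D}}_0u-Ru(R\,\cdot\,)$; by Step~1 and the relation $\LL_0\bigl(Ru(R\,\cdot\,)\bigr)=R(\LL_0u)(R\,\cdot\,)$, one has $\LL_0g=0$ in $\Om$. On $\gamma$, where $Rx=x$ and $W_u=0$, $g=(u_1,-u_2)^T-(-u_1,u_2)^T=2(u_1,-u_2)^T=0$ because $u=0$ on $\gamma$. For the normal derivative, the only term of $\partial_1W_u$ without a factor $x_1$ is $2c(\partial_2u_2,-\partial_2u_1)^T$, which vanishes on $\gamma$ since $u=0$ there forces $\partial_2u_1=\partial_2u_2=0$ on $\gamma$; the remaining parts of $\partial_1\widetilde{\mathcal{D}}_0u$ and of $\partial_1\bigl(Ru(R\,\cdot\,)\bigr)$ both equal $(\partial_1u_1,-\partial_1u_2)^T$ on $\gamma$, so $\partial_1g=0$ on $\gamma$.

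Finally, $\Gamma=\{x_1=0\}$ is non-characteristic for the constant-coefficient elliptic system $\LL_0$, so Holmgren's uniqueness theorem applied to $\LL_0g=0$ with vanishing Cauchy data on $\gamma$ gives $g\equiv0$ in a neighbourhood of $\gamma$ in $\Om$; since solutions of $\LL_0g=0$ are real-analytic and $\Om$ is connected, $g\equiv0$ in $\Om$. This establishes $\widetilde{\mathcal{D}}_0u(x)=Ru(Rx)$, and with it the theorem. (As an alternative to Step~1, one may first derive a Duffin-type extension formula for the Lam\'e equation across a straight line and read off the operator $\widetilde{\mathcal{D}}_0$ and the constant $c$ from it.)
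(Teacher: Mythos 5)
Your proposal is correct, and it takes a genuinely different route from the paper. The paper works at the level of the biharmonic equation: it shows $\Delta^2\widetilde{\mathcal{D}}_0u=\Delta^2\bigl(Ru(R\,\cdot\,)\bigr)=0$, uses the Schwarz reflection principle for the harmonic field $\widetilde{W}_u$ (odd in $x_1$) and the biharmonic reflection principle of Duffin/Farwig for $W_u$ (even in $x_1$) to dispose of the Cauchy data of orders $0,1,3$ on $\gamma$ by parity, explicitly checks only the order-$2$ matching $\partial_1^2V_u=\partial_1^2W_u$ using the Lam\'e equation on $\gamma$, and then invokes Cauchy--Kovalevskaya for the fourth-order operator $\Delta^2$. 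You instead stay with the second-order Lam\'e system: you verify directly that $\LL_0\widetilde{\mathcal{D}}_0u=0$ (I checked the computation: $\LL_0(-Ru)=-2(\lambda+\mu)\,\partial_1(\partial_2u_2,-\partial_2u_1)^T$ and $\LL_0W_u=2c(\lambda+3\mu)\,\partial_1(\partial_2u_2,-\partial_2u_1)^T$, so the cancellation pins down $c=(\lambda+\mu)/(\lambda+3\mu)$ exactly as you predict), match only the zeroth- and first-order Cauchy data of $g=\widetilde{\mathcal{D}}_0u-Ru(R\,\cdot\,)$ on $\gamma$, and conclude by Holmgren (equivalently, by noting that the principal part $\mathrm{diag}(\lambda+2\mu,\mu)\partial_1^2$ is invertible, so all normal derivatives of $g$ vanish on $\gamma$, and then by analyticity and connectedness of $\Om$). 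Your route buys a shorter uniqueness step --- two orders of Cauchy data instead of four, and no appeal to the harmonic or biharmonic reflection principles --- at the price of the longer direct expansion of $\LL_0W_u$; the paper's route conversely explains the structural origin of $W_u$ as a combination of harmonic and biharmonic building blocks and keeps the boundary computations to a single second-order check. Both are complete and correct; if you write yours up, do carry out the $\LL_0W_u$ expansion in full, since that is where all the content of the choice of $c$ lives.
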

\begin{proof}
For notational convenience, we write $u^R(x)=Ru(Rx)$ and $W_u(x)=c x_1 \widetilde{W}_u(x)$, where
\ben
\widetilde{W}_u(x):= x_1 \Delta u(x)+ 2(\partial_2 u_2, -\partial_2 u_1)^T(x).
\enn
{\color{rot}{Since $u$ vanishes on $\gamma \subset \{x_1=0\}$,}} we obtain $\widetilde{W}_u=0$ on $\gamma$. Using (\ref{eq1})  we deduce that
\ben
\Delta\widetilde{W}_u(x)=x_1\Delta^2 u(x)+2\begin{pmatrix}
\Delta \divv u \\ -\Delta\, \overrightarrow{\curl} u
\end{pmatrix}(x)=0.
\enn  Applying the Schwarz reflection principle for harmonic functions gives $\widetilde{W}_u(Rx)=-\widetilde{W}_u(x)$, or equivalently, $\widetilde{W}_u$ is odd in $x_1$.
Moreover, we find
\ben
&&\Delta^2 W_u(x)=c\Delta^2(x_1\widetilde{W}_u(x))=c[x_1\Delta^2 \widetilde{W}_u(x)+4\partial_1 \Delta \widetilde{W}_u(x)]=0,\\
&&\Delta^2 \widetilde{\mathcal{D}}_0 u=-R \Delta^2 u+\Delta^2 W_u=0.
\enn
 Recalling the reflection principle for biharmonic functions with homogeneous Dirichlet data (see \rot{\cite{F1994, Duffin1}}),
we obtain the relation $W_u(Rx)=W_u(x)$, implying that $W_u$ is even is $x_1$. Consequently,  there holds
 \be\label{eq2}
 \partial_1^j W_u(x)=0\qquad\mbox{on}\quad \gamma,\qquad j=0,1,3.
 \en
 To proceed with the proof, we only need to verify that
 \be\label{eqj}
  \partial_1^j \widetilde{D}_0u(x)=\partial_1^j u^R(x)\qquad\mbox{on}\quad \gamma,\qquad j=0,1,2,3.
   \en
 Then the relation $\widetilde{\mathcal{D}}_0u=u^R$ follows from the fact that $\Delta^2 (u^R)=0$, together with the Cauchy-Kovalevskaya theorem. Since the function $V_u:=u^R+Ru$ vanishes on $\gamma$ and is even in $x_1$, it also satisfies the relations in (\ref{eq2}). Hence, it is sufficient to prove (\ref{eqj}) with $j=2$, that is,  $\partial_1^2 V_u=\partial_1^2 W_u$ on $\gamma$.

From the definition of the reflection $R$, it follows that
 \ben
 \partial_1^2 V_u(0, x_2)=2\begin{pmatrix}
 -\partial_1^2 u_1 \\ \partial_1^2 u_2
 \end{pmatrix}(0, x_2).
 \enn
 On the other hand, by the definition of $W_u$,
 \be\label{eq3}
 \partial_1^2 W_u(0, x_2)=2c\partial_1\widetilde{W}_u(0, x_2)=\frac{\lambda+\mu}{\lambda+3\mu} \begin{pmatrix}
 2\Delta u_1+4\partial_1\partial_2 u_2 \\ 2\Delta u_2-4\partial_1\partial_2 u_1
 \end{pmatrix}(0, x_2).
 \en
 Since $\LL_0 u=0$ in $\Omega$ and $\partial_2^2 u_1=\partial_2^2 u_2=0$ on $\gamma$, we obtain
 \ben
 (\lambda+\mu)\partial_1\partial_2 u_2=-(\lambda+2\mu)\partial_1^2 u_2,\quad (\lambda+\mu)\partial_1\partial_2 u_1=-\mu \partial_1^2 u_2.
 \enn
 Therefore, it follows from (\ref{eq3}) that
 \ben
 \partial_1^2 W_u=\frac{1}{\lambda+3\mu}\begin{pmatrix}
 [2(\lambda+u)-4(\lambda+2\mu)] \partial_1^2 u_1\\
 [2(\lambda+\mu)+4\mu] \partial_1^2 u_2
 \end{pmatrix}=2\begin{pmatrix}
 -\partial_1^2 u_1 \\ \partial_1^2 u_2
 \end{pmatrix}\quad\mbox{on}\quad\gamma,
 \enn
 which proves $\partial_1^2V_u=\partial_1^2W_u$ on $\gamma$.
 \end{proof}

Let $\Omega^+ \subset \{x_1>0\}$ be a domain such that $\gamma \subset \partial \Omega^+$, and let
$\Omega^- := R(\Omega^+)$. {\color{rot}{Then $\Omega := \Omega^+ \cup \gamma \cup \Omega^-$ is a symmetric domain with $\Omega \cap \Gamma = \gamma$, and
from Theorem \ref{TH2} we obtain an extension formula for the first boundary value problem of the Lam\'e equation across a  straight line:}}
\begin{corollary}\label{Coro1} Suppose that $\LL_0 u=0$ in $\Omega^+$, $u=0$ on $\gamma$. Define the function
\ben
u^*(x)=\left\{\begin{array}{lll}
u(x)\quad&&\mbox{if}\quad x\in \Om^+,\\
\mathcal{D}_0u(Rx)\quad&&\mbox{if}\quad x\in \Om^-,
\end{array}\right.
\enn
where $\mathcal{D}_0 u:=R\widetilde{\mathcal{D}}_0u$ takes the explicit form
\be\label{D0}
\mathcal{D}_0 u=-u+c x_1^2\begin{pmatrix}
-\Delta u_1 \\ \Delta u_2
\end{pmatrix} -2c x_1 \begin{pmatrix} \partial_2 u_2 \\ \partial_2 u_1
\end{pmatrix}.
\en Then $u^*$ is a solution to (\ref{Lame}).
\end{corollary}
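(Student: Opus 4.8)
I would deduce this from Theorem~\ref{TH2}. The point is that Theorem~\ref{TH2} is stated for a solution defined on the whole symmetric domain, whereas here $u$ is only given on $\Omega^+$; so the first move is to continue $u$ across $\gamma$. Since $\gamma$ is a real-analytic arc, $\LL_0$ has constant coefficients, and $u=0$ on $\gamma$ is analytic data, boundary regularity for elliptic systems (cf.\ \cite{GT,NP1994}) makes $u$ real-analytic up to $\gamma$; hence $u$ extends to a real-analytic function $\widetilde u$ on a symmetric connected neighbourhood $\mathcal{N}$ of $\gamma$, chosen with $\mathcal{N}\cap\Gamma=\gamma$ and $\mathcal{N}\cap\{x_1>0\}\subset\Omega^+$. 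Moreover $\widetilde u$ still solves (\ref{Lame}) in $\mathcal{N}$: $\LL_0\widetilde u$ is real-analytic on $\mathcal{N}$ and vanishes on the nonempty open set $\mathcal{N}\cap\{x_1>0\}$, hence on all of the connected set $\mathcal{N}$, while $\widetilde u=0$ on $\gamma$.

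Next I would apply Theorem~\ref{TH2} to $\widetilde u$ on $\mathcal{N}$. It gives $\widetilde{\mathcal{D}}_0\widetilde u(x)=R\widetilde u(Rx)$ on $\mathcal{N}$, equivalently $\mathcal{D}_0\widetilde u(x)=\widetilde u(Rx)$ on $\mathcal{N}$, and also that $\widetilde{\mathcal{D}}_0\widetilde u$ is again a solution of (\ref{Lame}) in $\mathcal{N}$. Because $\widetilde u=u$ on $\mathcal{N}\cap\{x_1>0\}$ and $\mathcal{D}_0,\widetilde{\mathcal{D}}_0$ are local, for $x\in\mathcal{N}\cap\{x_1<0\}$ we get $u^*(x)=\mathcal{D}_0 u(Rx)=\mathcal{D}_0\widetilde u(Rx)=\widetilde u(x)$; together with $u^*=u=\widetilde u$ on $\mathcal{N}\cap\{x_1>0\}$ and $u^*|_\gamma=0=\widetilde u|_\gamma$ (letting $x_1\to 0$ in (\ref{D0}) and using $u=0$ on $\gamma$), this shows $u^*=\widetilde u$ on $\mathcal{N}$. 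Hence $u^*$ is a genuine (analytic) solution of (\ref{Lame}) in a neighbourhood of $\gamma$, and in particular $u^*=0$ on $\gamma$.

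It then remains to verify $\LL_0 u^*=0$ on the rest of $\Omega^-$, where $u^*(x)=R\,\widetilde{\mathcal{D}}_0 u(Rx)$ is given purely by formula (\ref{D0}). For this I would use that $\widetilde{\mathcal{D}}_0 u$ solves $\LL_0(\widetilde{\mathcal{D}}_0 u)=0$ in all of $\Omega^+$: either by a direct (somewhat lengthy) computation from $\LL_0 u=0$ — which is the whole purpose of the correction term $W_u$, cf.\ \cite{Duffin} — or, more quickly, because $\widetilde{\mathcal{D}}_0 u$ is real-analytic on $\Omega^+$ and, by the previous paragraph, $\LL_0\widetilde{\mathcal{D}}_0 u=\LL_0\widetilde{\mathcal{D}}_0\widetilde u=0$ on the open set $\mathcal{N}\cap\{x_1>0\}$, hence on the connected set $\Omega^+$. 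Since $\LL_0$ is isotropic, the substitution $v\mapsto v^R$ with $v^R(x):=Rv(Rx)$ obeys $\LL_0 v^R(x)=R(\LL_0 v)(Rx)$ (because $\Delta$ and $\nabla(\nabla\cdot\,)$ are invariant under orthogonal changes of variable), so it carries solutions on $\Omega^+$ to solutions on $\Omega^-=R(\Omega^+)$; applied to $v=\widetilde{\mathcal{D}}_0 u$, for which $v^R=u^*$ on $\Omega^-$, this yields $\LL_0 u^*=0$ on $\Omega^-$. Collecting the pieces — $u^*=u$ on $\Omega^+$, $u^*=\widetilde u$ on $\mathcal{N}$, and $\LL_0 u^*=0$ on $\Omega^-$ — and using $\Omega=\Omega^+\cup\gamma\cup\Omega^-\subset\Omega^+\cup\mathcal{N}\cup\Omega^-$ gives $\LL_0 u^*=0$ in $\Omega$ together with $u^*=0$ on $\gamma$, as required.

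The hard part is the gluing along $\gamma$: passing from ``a solution on $\Omega^+$ plus a solution on $\Omega^-$'' to a single solution on the symmetric domain $\Omega$. I would handle this by the analytic continuation of the first paragraph followed by Theorem~\ref{TH2}, so that the substantive input — the exact form of $\widetilde{\mathcal{D}}_0$ and the matching of the normal derivatives $\partial_1^j$ ($j=0,1,2,3$) on $\gamma$ — is inherited from the proof of Theorem~\ref{TH2}. A more self-contained alternative avoids the continuation result and instead repeats the Cauchy--Kovalevskaya matching argument of that proof with $u$ defined only on $\Omega^+$, checking that the traces of $u^*$ and of its conormal (traction) derivative along $\gamma$ from the $\Omega^-$ side coincide with those of $u$ from the $\Omega^+$ side; then $u^*\in H^1_{loc}(\Omega)$ is a weak, and hence by interior regularity a classical, solution of (\ref{Lame}).
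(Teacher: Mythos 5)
Your proof is correct, but it is worth noting that the paper does not actually prove this corollary: it presents it as following ``from Theorem \ref{TH2}'' and then points to Duffin's Theorem 2 in \cite{Duffin} for the one-sided extension formula, so the logical gap you identify --- Theorem \ref{TH2} presupposes a solution on the \emph{whole} symmetric domain, whereas the corollary starts from data on $\Omega^+$ only --- is real and is exactly what Duffin's paper (and your argument) supplies. Your route is a legitimate and self-contained way to fill it: first continue $u$ analytically across $\gamma$ using boundary analyticity for the Dirichlet problem of an analytic elliptic system on a flat analytic boundary portion, then invoke Theorem \ref{TH2} on the small symmetric neighbourhood $\mathcal{N}$ to identify $u^*$ with the continuation $\widetilde u$ there, and finally propagate $\LL_0\widetilde{\mathcal{D}}_0u=0$ from $\mathcal{N}\cap\{x_1>0\}$ to all of $\Omega^+$ by real-analytic unique continuation and transport it to $\Omega^-$ via the orthogonal invariance $\LL_0\bigl(Rv(R\cdot)\bigr)(x)=R(\LL_0 v)(Rx)$; all of these steps check out, including the identity $\mathcal{D}_0\widetilde u(Rx)=\widetilde u(x)$ and the locality of $\mathcal{D}_0$. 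Two minor remarks: the citation for analyticity up to the boundary should really be Morrey--Nirenberg (the references \cite{GT,NP1994} give interior analyticity for scalar equations and corner asymptotics, respectively, not analytic boundary regularity for the Lam\'e system), and you should be explicit that this boundary-analyticity input is the genuinely nontrivial ingredient --- it is what replaces Duffin's direct one-sided construction; your sketched alternative via the Cauchy--Kovalevskaya matching of the traces $\partial_1^j$ on $\gamma$ from one side is closer in spirit to what Duffin actually does.
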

{\color{rot}{The relation $u(x) = \mathcal{D}_0u(Rx) \, , \; x \in \Om^-$,
coincides with Duffin's extension formula \cite[Theorem 2]{Duffin}. }}
It follows from the above corollary that $u^*(Rx)=\mathcal{D}_0u^*(x)$ for all $x\in \Om$. By the definition of $\mathcal{D}_0$, we conclude that the value of $u^*$ at $Rx$ is uniquely determined by $u$ in a neighborhood of the imaging point $x$, which is in contrast to the point-to-point reflection principles for the Laplace and Helmholtz equations under the Dirichlet or Neumann boundary condition.
Combining Corollaries \ref{Coro0} and \ref{Coro1}, we can obtain the Green's tensor to the first boundary value problem of the Lam\'e equation in the half-plane $\{x_1>0\}$, that is,
\be\label{G0}
G_0(x,y)=\Phi_0(x,y)+\mathcal{D}_0\Phi_0(Rx, y)
\en where $\mathcal{D}_0$ is defined via (\ref{D0}) and $\Phi_0(x,y)$ is the free-plane Green's tensor to the Lam\'e operator, given by (see \cite[Chapter 2.2]{HW2008})
\ben
\Phi_0(x,y)=\frac{1}{4\pi}\left[ -\frac{3\mu+\lambda}{\mu (\lambda+2\mu)}\,\ln |x-y|\,\textbf{I}+\frac{\lambda+\mu}{\mu (\lambda+2\mu)|x-y|^2} (x-y)\otimes (x-y)\right],\quad x\neq y.
\enn
Here $\textbf{I}\in \R^{2\times 2}$ is the 2-by-2 identity matrix and $(x\otimes y)_{ij}:=x_iy_j$ for $i,j=1,2$, where $x=(x_1, x_2)$, $y=(y_1, y_2)\in \R^2.$ The extension formula and
Green's tensor in the half-plane $\{x_2>0\}$ can be obtained analogously by a coordinate rotation.

\subsection{Reflection principle for Navier equation}\label{subsec:2.3}
Consider the boundary value problem
\be\label{Navier}
\LL_\omega u:=(\LL_0+\omega^2 )u=0 \quad\mbox{in}\quad\Om,\qquad u=0\quad\mbox{on}\quad\gamma\subset \{x_1=0\}
\en
for the Navier equation in the symmetric domain $\Om$.
We want to find a formula connecting $u(Rx)$ and $u(x)$ for all $x\in\Om$. Our approach relies on the extension formula for the Lam\'e operator presented in Corollary \ref{Coro1}.

Let $G_0(x,y)$ be the half-plane Green's tensor to the first boundary value problem of the Lam\'e equation; see (\ref{G0}).  For $\delta>0$ sufficiently small, define $\Om_\delta:=\{x\in\Omega: \mbox{dist}(x, \partial \Om)>\delta\}$.  Introduce the function
\ben
v(x)=u(x)-\omega^2\int_{B_\delta(x)} G_0(x, y)^T u(y)\,dy,\quad x\in\Om_\delta,
\enn
where $B_\delta(x)=\{y: |y-x|<\delta\}$. Then it is easy to check that $v$ fulfills the homogeneous Lam\'e equation with the homogeneous Dirichlet boundary condition, that is,
\ben
\LL_0 v=0\quad\mbox{in}\quad\Om_\delta,\qquad v=0\quad\mbox{on}\quad \gamma\cap \Om_\delta.
\enn
Applying Theorem \ref{TH2} and the definition of $\mathcal{D}_0$ in (\ref{D0}) to $v$, we obtain $v(Rx)=\mathcal{D}_0 v(x)$ for all $x\in \Omega_\delta$, that is,
\be\nonumber
u(Rx)&=&\mathcal{D}_0u(x)-\omega^2\int_{B_\delta(x)} {\color{rot}{\mathcal{D}_0 \,G_0(x, y)^T u(y)}}\,dy+\omega^2\int_{B_\delta(x)} G_0(Rx, y)^T u(y)\,dy\\ \label{Ex-Navier}
&=:& \mathcal{D}_\omega u(x).
\en
The above equality establishes a relation between $u(Rx)$ and $u(x)$. For every fixed $x\in \Om$, the number $\delta$ on  the right hand side of (\ref{Ex-Navier}) can be replaced by any number less that $\mbox{dist}(x, \partial\Om)$. In fact, for any $\epsilon\in(0,\delta)$, it holds that (see Theorem \ref{TH1} (ii))
\ben
G_0(Rx,y)=\mathcal{D}_0 G_0(x,y)\quad \mbox{for all}\quad y \in B_\delta(x)\backslash\overline{B}_\epsilon(x),\enn
from which the relation
\ben
-\omega^2\int_{B_\delta(x)} {\color{rot}{\mathcal{D}_0 \,G_0(x, y)^T u(y)}}\,dy+\omega^2\int_{B_\delta(x)} G_0(Rx, y)^T u(y)\,dy\\
=-\omega^2\int_{B_\epsilon(x)} {\color{rot}{\mathcal{D}_0 \,G_0(x, y)^T u(y)}}\,dy+\omega^2\int_{B_\epsilon(x)} G_0(Rx, y)^T u(y)\,dy
\enn
follows.
Hence, the function $\mathcal{D}_\omega u(x)$ on the right hand side of (\ref{Ex-Navier}) is well defined as long as $u$ makes sense in a neighboring area of $x\in \Om$.
Moreover, we observe that as for the Lam\'e equation, the value of $u(Rx)$ is uniquely determined by the function $u$ near the imaging point $x$.  Note that  the volume $B_\delta(x)$ on the right hand side of (\ref{Ex-Navier}) can also be replaced by any domain containing $x$, for example, the region $\Omega$ (provided it is bounded).
The reflection principle for the Navier equation will be summarized in the following theorem.

\begin{theorem}\label{TH3}
Let $u$ be a solution to (\ref{Navier}). Then
\begin{itemize}
\item[(i)] It holds that $u(Rx)=\mathcal{D}_\omega u(x)$ for all $x\in \Omega_\delta$.
\item[(ii)] The function $w(x):=R[\mathcal{D}_\omega u](x)$ satisfies
\ben
\LL_\omega w=0\quad\mbox{in}\quad\Om_\delta,\qquad w=0\quad\mbox{on}\quad \gamma\cap \Om_\delta.\enn
Further, we have the relation $w(x)=Ru(Rx)$ for all $x\in \Omega_\delta$.
\item[(iii)] The function $w$ defined in assertion (ii) can be extended into the whole domain $\Omega$ as a solution of the Navier equation.
In particular, we have $w=0$ on  a smooth part $\gamma_1 \subset \partial\Om$ if $u=0$ on $R(\gamma_1)$.
\end{itemize}
\end{theorem}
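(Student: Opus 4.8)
The plan is to obtain the three assertions from the reduction to the Lam\'e equation carried out just above the theorem, combined with the invariance of the Navier operator under the reflection $R$.

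Part (i) has essentially been established already: the identity $u(Rx)=\mathcal{D}_\omega u(x)$ on $\Om_\delta$ is precisely formula (\ref{Ex-Navier}), and I would merely recapitulate its three ingredients. First, one subtracts from $u$ the Lam\'e volume potential $\omega^2\int_{B_\delta(x)}G_0(x,y)^T u(y)\,dy$ built from the half-plane Lam\'e Green's tensor $G_0$ of (\ref{G0}); since $G_0(\cdot,y)$ solves the homogeneous Lam\'e equation away from $y$, carries the fundamental-solution singularity at $y$, and vanishes for $x\in\Gamma$, the remainder $v$ satisfies $\LL_0 v=0$ in $\Om_\delta$ and $v=0$ on $\gamma\cap\Om_\delta$; here one needs only a little care with the $x$-dependence of the integration ball, or one may (as the remark after (\ref{Ex-Navier}) permits) replace $B_\delta(x)$ by a fixed bounded domain containing $x$. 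Second, Theorem \ref{TH2} (equivalently Corollary \ref{Coro1}) applied to $v$ yields $v(Rx)=\mathcal{D}_0 v(x)$. Third, unwinding the definition of $v$ together with $G_0(Rx,y)=\mathcal{D}_0 G_0(x,y)$ off the diagonal produces (\ref{Ex-Navier}), and the same relation shows that the radius $\delta$ (indeed the shape of the integration domain) plays no role.

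For part (ii) I would first use (i) to rewrite $w(x)=R[\mathcal{D}_\omega u](x)=Ru(Rx)$ for $x\in\Om_\delta$, which is the claimed identity $w(x)=Ru(Rx)$. That $\LL_\omega w=0$ in $\Om_\delta$ is then immediate from the rotational invariance of the Lam\'e system: a direct computation gives $\LL_\omega\big(Q\,u(Q^T\cdot)\big)=Q\,(\LL_\omega u)(Q^T\cdot)$ for every orthogonal matrix $Q$, applied here with $Q=R=R^T$. Finally $w=0$ on $\gamma\cap\Om_\delta$ because $R$ fixes $\Gamma=\{x_1=0\}$ pointwise, so $w(x)=Ru(Rx)=Ru(x)=0$ there.

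For part (iii), the key observation is that the right-hand side $Ru(R\cdot)$ of the identity in (ii) is already defined and (real-)analytic on the whole symmetric domain $\Om$, because $x\in\Om$ implies $Rx\in\Om$; by the same invariance it solves the Navier equation throughout $\Om$, and since it coincides with $w$ on $\Om_\delta$ it is the sought extension, which I keep denoting $w$. The boundary statement follows by continuity: if $\gamma_1\subset\partial\Om$ is a smooth piece and $u=0$ on $R(\gamma_1)\subset\partial\Om$, then $x\to\gamma_1$ forces $Rx\to R(\gamma_1)$, whence $w(x)=Ru(Rx)\to0$, using the $C^{0,\alpha}$-regularity of $u$ up to that flat portion of the boundary. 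I expect the only real obstacle to sit in part (i) — namely the verification that the Green's-tensor potential cancels the zeroth-order term $\omega^2 u$ so that Theorem \ref{TH2} becomes applicable to $v$; assertions (ii) and (iii) are then formal consequences of (i) together with the invariance of $\LL_\omega$ under $R$.
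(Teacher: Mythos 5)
Your proposal is correct and follows essentially the same route as the paper: the paper states Theorem \ref{TH3} without a separate proof, relying on the derivation of (\ref{Ex-Navier}) given just before it (subtraction of the volume potential built from $G_0$, reduction to the Lam\'e problem, application of Theorem \ref{TH2}), which is exactly what you recapitulate for part (i). Your arguments for (ii) and (iii) — the identity $w=Ru(R\cdot)$, the invariance of $\LL_\omega$ under conjugation by the orthogonal reflection $R$, and the observation that $Ru(R\cdot)$ is already defined and solves the Navier equation on all of the symmetric domain $\Om$ — are the intended (and correct) completions of the statement.
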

In the application of the reflection formula (\ref{Ex-Navier}) to inverse elastic scattering, we need a corresponding analytic extension result.
Let $D^+\subset \R^2$ and $\Pi\subset \R^2\backslash\overline{D^+}$ be domains with piecewise smooth boundary (in particular, polygonal domains) and suppose that
$\gamma \subset \partial D^+ \cap \partial \Pi$. Then $\Omega := D^+\cup\gamma\cup\Pi$ is also a domain with
piecewise smooth boundary. Moreover, as in (\ref{Ex-Navier}) we define the function $\mathcal{D}_\omega u(x), \, x \in \Omega_\delta$.

\begin{lemma}\label{re-rp}
Consider the boundary value problem
\ben
\left\{\begin{array}{lll}
\LL_\omega u=0 \quad\mbox{in} \quad \Omega \, ,\\
 \quad u=0 \quad\mbox{on}\quad \partial D^+.
 \end{array}\right.
\enn
Then we have:
\begin{itemize}
\item[(i)]
The function $w(x):=R[\mathcal{D}_\omega u](x)$, $x\in \Omega_\delta$, can be analytically extended into $D^-:=R(D^+)$
as a solution of the Navier equation. Moreover, the extended solution satisfies the relations
\ben
w(x)=Ru(Rx)\quad\mbox{in}\quad D^-,\quad w=0\quad\mbox{on}\quad \partial D^-.
\enn
\item[(ii)] 
Suppose that $\Omega$ contains a half-plane whose boundary is the extension of
one segment of  $\partial D^-$ in $\R^2$.
 Then both $w$ and $u$ can be analytically extended onto the whole plane $\R^2$.
\end{itemize}
\end{lemma}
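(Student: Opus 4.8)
My plan is to write the continuation down by hand and then match it to $R[\mathcal{D}_\omega u]$ near $\gamma$. The Lam\'e--Navier operator is invariant under orthogonal changes of variables (each of $\mu\Delta$, $(\lambda+\mu)\nabla(\nabla\cdot\,)$ and $\omega^2$ is isotropic), so $\LL_\omega v=0$ in an open set $U$ implies $\LL_\omega(R\,v(R\,\cdot))=0$ in $R(U)$. Applying this to $u$ (which solves $\LL_\omega u=0$ in $\Omega\supset D^+$) makes $\widetilde w(x):=R\,u(Rx)$ a Navier solution in $D^-=R(D^+)$, and since $u$ is continuous up to $\partial D^+$ with $u|_{\partial D^+}=0$ (the regularity recalled in Section~1), $\widetilde w|_{\partial D^-}=0$. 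It remains to identify $\widetilde w$ with the analytic continuation of $w:=R[\mathcal{D}_\omega u]$ across $\gamma$, and this I would do locally. Fix $x_0$ in the relative interior of $\gamma$, normalise $\gamma\subset\{x_1=0\}$ with $D^+$ in $\{x_1>0\}$ and $\Pi$ in $\{x_1<0\}$ near $x_0$, and choose a ball $B:=B_\rho(x_0)\subset\Omega$ so small that $B\cap\partial D^+=B\cap\partial\Pi=B\cap\{x_1=0\}$ (possible since $\gamma$ is a segment of both piecewise-smooth boundaries and $\Omega$ is a full neighbourhood of $x_0$). Then $B$ is a \emph{symmetric} domain, $\LL_\omega u=0$ in $B$, $u=0$ on $B\cap\{x_1=0\}$, so Theorem~\ref{TH3}(i) applies inside $B$ and gives $u(Rx)=\mathcal{D}_\omega u(x)$ for $x$ well inside $B$ (here the ball $B_\delta(x)$ in (\ref{Ex-Navier}) is shrunk to a small ball about $x$ in $B$, the value being independent of this choice, so it coincides with the $\mathcal{D}_\omega u$ of the Lemma). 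Hence $w=R\,\mathcal{D}_\omega u=R\,u(R\cdot)=\widetilde w$ on the nonempty open set $B\cap\{x_1<0\}\subset\Omega_\delta\cap D^-$; since $\widetilde w$ is a real-analytic solution of $\LL_\omega=0$ on the connected set $D^-$, it is the desired analytic extension of $w$, with $\widetilde w|_{\partial D^-}=0$. Note that this also gives $u=R\,w(R\cdot)$ on $D^+$, which will let us transport an extension of $w$ back to one of $u$.

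\noindent\textbf{Part (ii).} By (i), $w$ is a single real-analytic Navier solution on $\Omega_\delta\cup D^-$ that vanishes on $\partial D^-$, hence on the segment $\gamma'\subset\partial D^-$ whose supporting line $\ell$ bounds the half-plane $H\subset\Omega$; since $H\subset\Omega$, the construction of (i) makes $w$ available as a Navier solution on (a neighbourhood of $\overline H$ within) $\Omega\cup D^-$, with $w=0$ on $\gamma'\subset\partial H$. The plan is to reflect $w$ once more, across $\ell$. Taking the source domain to be $H$ and applying the extension principle of Section~\ref{sec:2} (Corollary~\ref{Coro1} / Theorem~\ref{TH3}, with $\{x_1=0\},\gamma$ replaced by $\ell,\gamma'$, and $R_\ell$ the reflection in $\ell$) extends $w$ from $H$ across $\gamma'$ onto $R_\ell(H)$, yielding a Navier solution on $H\cup\gamma'\cup R_\ell(H)=\R^2\setminus(\ell\setminus\gamma')$ that, by the matching argument of (i), coincides with the $w$ already known on $D^-$. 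Since $R_\ell(H)$ is a full half-plane, $w$ is thereby defined on all of $\R^2$ except the two collinear residual rays $\ell\setminus\gamma'$. It remains to remove that exceptional $1$-dimensional set; I would do this by combining the above with the reflections of $w$ across the other edges of the polygon $\partial D^-$ (recall $w=0$ on all of $\partial D^-$), whose images cover full $2$-dimensional neighbourhoods of points of $\ell\setminus\gamma'$ and so force the one-sided limits of $w$ there to agree, again by real-analyticity and analytic continuation. Once $w$ is shown to be an entire Navier solution, $R\,w(R\cdot)$ is a Navier solution on $R(\R^2)=\R^2$ agreeing with $u$ on $D^+$, hence is the analytic continuation of $u$; so both $w$ and $u$ extend to entire solutions of the Navier equation.

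\noindent\textbf{The main obstacle.} The isotropy of $\LL_\omega$, the boundary values, and the elementary topology are routine. The crux of (i) is bridging the genuine reflection $R\,u(R\cdot)$ with the \emph{nonlocal} integro-differential operator $\mathcal{D}_\omega$; I expect this to be handled exactly as above, by running Theorem~\ref{TH3} in a small ball where the configuration is honestly symmetric and then continuing analytically. The crux of (ii) is the geometric bookkeeping in the iteration: showing that reflecting the half-plane $H$ across a proper sub-segment of its bounding line, together with the reflections across the remaining edges of $\partial D^-$, really exhausts $\R^2$ and that the pieces glue into one single-valued real-analytic Navier solution, the residual slits being removed in the process. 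This is precisely where the hypothesis ``$\Omega$ contains a half-plane along an edge of $\partial D^-$'' is used in an essential way.
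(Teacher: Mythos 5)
Your part (i) follows the paper's own route: identify $R\,u(R\cdot)$ as a Navier solution in $D^-$ vanishing on $\partial D^-$ (isotropy of $\LL_\omega$), match it with $R[\mathcal{D}_\omega u]$ near an interior point of $\gamma$ via Theorem \ref{TH3}, and then continue analytically into $D^-$. That part is fine.

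Part (ii), however, has a genuine gap. You reflect $w$ only across the sub-segment $\gamma'\subset\ell$ on which you know $w$ vanishes, obtain a solution on $\R^2\setminus(\ell\setminus\gamma')$, and then try to remove the two residual rays by invoking reflections of $w$ across the \emph{other} edges of $\partial D^-$. That last step is not justified: the reflection principle of Section \ref{sec:2} extends $w$ across an edge $e$ only into the mirror image, with respect to the line through $e$, of a domain adjacent to $e$ on one side of that line in which $w$ already solves the Navier equation with zero data on $e$; there is no reason such images cover neighbourhoods of the points of $\ell\setminus\gamma'$, nor that the resulting pieces glue to the branch you already have. Moreover, a solution vanishing only on a proper sub-segment of a line in general does \emph{not} extend across the remainder of that line (already for the Schwarz reflection of harmonic functions the slits are genuinely there), so the slits cannot be wished away by ``bookkeeping.'' The step you are missing --- and the reason the half-plane hypothesis is imposed --- is a one-line analytic continuation \emph{along the line}: $w$ is real-analytic in $\Omega$, which contains the full half-plane $H$ bounded by $\ell$, so the restriction of $w$ to $\ell$ is a real-analytic function of the arclength parameter wherever it is defined; since it vanishes on the sub-segment $l=\gamma'$, it vanishes on the \emph{entire line} $\ell$. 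With $w=0$ on all of $\ell$ and $w$ a Navier solution on all of $H$, a single application of Theorem \ref{TH3} (after rotating $\ell$ to $\{x_1=0\}$ and $H$ to $\{x_1>0\}$) extends $w$ across the whole line onto the complementary half-plane, i.e.\ onto all of $\R^2$, with no residual slits at all. The passage back to $u$ via $u=R\,w(R\cdot)$ is then as you describe.
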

\begin{proof} (i)
By the interior regularity for elliptic equations, $u$ is analytic in $\Om$ and thus $w$ is analytic in $\Omega_\delta$. In view of Theorem \ref{TH3}, we first have the coincidence
$w(x)=Ru(Rx)$ in the connected component of $\Omega_\delta\cap D^-$ containing $\gamma\cap \Omega_\delta$, and both functions
 fulfill the Navier equation there. On the other hand, the function $x\rightarrow Ru(Rx)$ obviously fulfills the Navier equation in $D^-$. This implies that $w$ can be analytically extended into $D^-$ by
$Ru(R\cdot)$, and in particular $w=0$ on $\partial D^-$, since $u=0$ on $\partial D^+$.

ii) 
Assume that $l\in \partial D^-$ is a line segment lying on the straight line $L=\{x: x_2=a x_1+b,  x_1\in \R\}$ for some $a, b\in \R$ and that the half-plane
$\{x: x_2> a x_1+b, x_1\in \R\}$ is contained in $\Omega$. Let $w$ be the function defined in the first assertion. Then we have $w=0$ on $l$ and, by the analyticity of $w$ in $\Omega$, $w=0$ on $L$.  
Applying coordinate translation and rotation, we assume that the orthogonal matrix $Q$ transforms $L$ to the line $\{x_1=0\}$ and transforms the above mentioned half-plane
inside $\Omega$ to $\{x_1> 0\}$. Since the Navier equation remains invariant under the transformation $Q$, the function  $\tilde{w}(x):=w(Qx)$ satisfies
\ben
\LL_\omega \tilde{w}=0\quad\mbox{in}\quad x_1>0,\qquad \tilde{w}=0 \quad\mbox{on}\quad x_1=0.
\enn
By Theorem \ref{TH3},  $\tilde{w}$ can be analytically extended into $\R^2$ by $\tilde{w}(x)=\mathcal{D}_\omega \tilde{w}(Rx)$ for $x_1<0$.
This in turn implies that $w$ and thus $u$ can be analytically extended onto the whole plane $\R^2$.

\end{proof}

\section{Uniqueness to inverse elastic scattering}\label{sec:3}
Consider elastic scattering from a rigid obstacle $D\subset\R^2$ modeled by
\ben\left\{\begin{array}{lll}
\LL_\omega u=0\quad\quad \mbox{in}\quad D^c,\qquad u=u^{in}+u^{sc},\\
u=0\quad\qquad\mbox{on}\quad \partial D,\\
u^{in}(x)=c_p d e^{ik_p x\cdot d}+c_sd^\perp e^{ik_s x\cdot d},\\
\mbox{$u^{sc}$ satisfies the Kupradze radiation condition stated in Definition \ref{Def}.}
\end{array}\right.
\enn
To prove the uniqueness result with a single plane wave, we need the concept of nodal set of a solution $u$ to the above boundary value problem.
\begin{definition}
The nodal set $\mathcal{N}$ of $u$ consists of all points $x\in \R^2\backslash\overline{D}$ such that $u(x)=0$.
\end{definition}
The reflection principle for the Navier equation (Theorem \ref{TH3} and Lemma \ref{re-rp})  can be used to prove the following lemma.

\begin{lemma}\label{TH4}
If $D$ is a connected polygonal obstacle,
then the nodal set $\mathcal{N}$ of $u$ cannot contain a line segment with both end points lying on $\partial D$.
\end{lemma}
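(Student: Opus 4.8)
The plan is to argue by contradiction: suppose $\mathcal{N}$ contains a line segment $\ell$ whose two endpoints $P,Q$ both lie on $\partial D$. By analyticity of $u$ in $D^c$ (guaranteed by interior analytic regularity for the Navier equation), $u$ vanishes on the entire line $L$ through $P$ and $Q$, at least on the portion of $L$ that lies in $D^c$. The idea is to use the line $L$ as a mirror and repeatedly apply the reflection principle for the Navier equation (Theorem \ref{TH3}, Lemma \ref{re-rp}) to propagate the vanishing of $u$ into forbidden regions, eventually forcing $u\equiv 0$ in $D^c$ or forcing $D$ to be unbounded, either of which is absurd (the former contradicts the radiation condition together with the non-triviality of the incident wave; recall $|c_p|+|c_s|\neq 0$).

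First I would reduce to the case where $L$ is the coordinate line $\{x_1=0\}$ by a rigid motion, under which the Navier equation and the rigid boundary condition are invariant. I would then consider the reflection $R$ across $L$ and look at the connected component of $D^c$ adjacent to the open segment $\gamma\subset L\cap D^c$ (choosing $\gamma$ maximal). Applying Lemma \ref{re-rp}(i) with $D^+$ a suitable subdomain of $D^c$ touching $\gamma$, the function $w=R[\mathcal{D}_\omega u](R\cdot)=u(R\cdot)$ solves the Navier equation in $R$ of that region and vanishes on $R(\partial D^+)$. The key geometric point is that $R$ maps portions of $\partial D$ lying on one side of $L$ either onto $\partial D$ again (if $D$ has a symmetry across $L$ locally) or into the interior $D$ or into $D^c$; in the generic situation the reflected vanishing set reaches a part of $D^c$ that was not previously known to be a nodal set, and we iterate. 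Because $D$ is a polygon with finitely many sides and $L$ is fixed, after finitely many reflections across $L$ (and across other lines through vertices that $u$ is forced to vanish on, using that two nodal line segments meeting $\partial D$ at a vertex can only make finitely many distinct angles) the union of reflected copies either tiles a half-plane — then Lemma \ref{re-rp}(ii) extends $u$ analytically to all of $\R^2$, so $u$ is an entire Kupradze-radiating solution, hence $u^{sc}\equiv 0$ and $u=u^{in}$, which cannot vanish on a line since $u^{in}$ is a nonzero combination of plane waves — or the reflected copies escape to infinity, contradicting boundedness of $D$ because $\partial D$ would have to contain infinitely many segments or the obstacle would be forced to be a half-plane/strip.

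The main obstacle, and the step I expect to require the most care, is the bookkeeping of the iterated reflections: controlling the orbit of $\gamma$ and of $\partial D$ under the group generated by the reflection(s), showing that the process genuinely enlarges the known nodal set at each stage (so it cannot stall), and ruling out the degenerate configurations — for instance when $L$ is parallel or perpendicular to sides of $D$, or passes through vertices — where the naive reflection argument could fail to produce new information. This is exactly the point where the "modified path argument" alluded to in the introduction is needed, because the non-pointwise (neighborhood-to-point) nature of $\mathcal{D}_\omega$ means one must keep a full neighborhood of each image point inside the region where $u$ is already known, rather than just the reflected curve; I would handle this by always reflecting open domains (not just segments), using Lemma \ref{re-rp}(i) which is stated precisely for open subdomains $D^+\subset\Omega$, and chaining the analytic continuations so that at each step the domain of definition of the extended solution is open and connected. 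Once $u$ is known to extend to $\R^2$ as a radiating entire solution, the contradiction with $u^{in}$ being a nontrivial superposition of plane waves (and thus not identically zero on any line) closes the argument.
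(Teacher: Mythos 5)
Your overall skeleton matches the paper's: argue by contradiction, use the reflection principle for the Navier equation (Theorem~\ref{TH3}, Lemma~\ref{re-rp}) to iterate analytic continuations until a half-plane is reached, extend $u^{sc}$ to an entire Kupradze radiating solution, conclude $u^{sc}\equiv 0$, and contradict $|c_p|+|c_s|\neq 0$. But the engine that drives the iteration is missing, and the mechanism you do propose does not work as stated. Reflecting "repeatedly across $L$" is vacuous, since the square of a reflection is the identity; the paper instead reflects across a \emph{sequence of different lines}. Concretely, it takes $E_0$ to be the bounded component of $D^c\setminus l_0$ (so that $u=0$ on \emph{all} of $\partial E_0\subset\partial D\cup l_0$, which is what licenses reflection across any side of $E_0$, not just across $L$), fixes an injective path $\pi(t)$ from a point of $l_0$ to infinity in $D^c$, and sets $E_{n+1}=R_n(E_n)$ where $R_n$ is the reflection across the line containing the segment $l_n\subset\partial E_n$ through which the path last exits $E_n$. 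You never introduce the bounded component $E_0$ or the escape path, and without them you have no rule selecting the next mirror and no way to certify progress.

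The point you correctly flag as "the main obstacle" — that the process cannot stall — is exactly the content of the paper's Lemma~\ref{infty}: the exit points $P_n=\pi(t_n)$ satisfy $\mathrm{dist}(P_n,\partial D)\to\infty$, proved by a winding argument showing that if the $P_n$ accumulated at a corner of angle $\phi_0$, then after roughly $2\pi/\phi_0$ reflections the domain $E_{N+J}$ would overlap $E_N$ along the path, contradicting the injectivity of $\pi$ and the exit-time maximality $E_N\cap\{\pi(t):t>t_N\}=\emptyset$. Your parenthetical about "finitely many distinct angles at a vertex" gestures at this but does not supply it. Your final dichotomy is also off: in the paper, the reflected copies escaping to infinity is not a contradiction with boundedness of $D$ — it is the desired outcome, because once $E_N$ is far from $\partial D$ one of its sides extends to a full line in $D^c$, a half-plane lies in the domain of definition, and Lemma~\ref{re-rp}(ii) gives the global extension. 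So while your target contradiction and your use of the reflection principle are the right ingredients, the proposal as written does not contain a proof: the iteration scheme, the termination argument (Lemma~\ref{infty}), and the half-plane geometry at the last step all still need to be constructed.
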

\begin{proof} Assume contrarily that $l_0\subset \mathcal{N}$ is a line segment with the two end points on $\partial D$.
Choose a point $P_0\in l_0$ and a continuous and injective path $\pi(t)$, $t\geq0$, starting at $P_0=\pi(0)$ and leading to infinity in the unbounded component of $D^c\backslash l_0$. Denote by $E_0\subset D^c$ the bounded component of $D^c\backslash l_0$; recall that $D$ and $D^c$ are polygonal domains without cracks, and $D$ is bounded.


Then $\partial E_0\subset \partial D\cup l_0$ and we have
\ben
\LL_\omega u=0\quad\mbox{in}\quad E_0,\qquad u=0\quad\mbox{on}\quad \partial E_0.
\enn
In what follows we denote by $R_n$ ($n\geq 0$) the reflection with respect to the straight line $L_n$ containing the line segment $l_n$, and by $R_n^o$
the reflection with respect to the straight line $L_n^o$ that is parallel to $L_n$ and contains the origin $(0,0) \in \R^2$. Moreover, let
$\mathcal{D}_\omega^{(n)}$ denote the reflection operator for the Navier equation with respect to the line segment $l_n \subset L_n $, which is obtained as in
(\ref{Ex-Navier}) after translation and rotation.
Write $E_1=R_0(E_0)$. Obviously, the function $u_0:=u|_{E_0}$ can be analytically extended into $D^c\backslash E_0$ across the line segment $l_0$, and in particular, $u_0=u$
is well-defined near the path $\pi(t)$ in $E_1$.

Transforming $L_0 $ to the line $\{x_1=0\}$ by translation and rotation, from
Lemma  \ref{re-rp} (i) we obtain that the function $u_1(x):=R_0^o[\mathcal{D}^{(0)}_\omega u](x)$, $x \in D^c$, satisfies the relation
\ben
u_1(x)=R_0^ou(R_0x), \; x \in E_1,
\enn
and the boundary value problem
\ben
\LL_\omega u_1=0\quad\mbox{in}\quad E_1,\qquad u_1=0\quad\mbox{on}\quad \partial E_1.
\enn
Since $E_1$ is bounded, we see that $\partial E_1\cap \{\pi(t): t>0\}\neq\emptyset$. Set $t_1:=\sup\{t: E_1\cap \pi(t)\neq\emptyset\}$. Without loss of generality we suppose that $P_1:=\pi(t_1)$
is not a corner point of $\partial E_1$. Note that this can always be achieved by locally changing the path near $t=t_1$ if necessary.
Hence it holds that $P_1\neq P_0$ and $E_1\cap\{\pi(t): t>t_1\}=\emptyset$.
Denote by $l_1\subset\partial E_1$
the line segment containing the point $P_1$.  Setting $E_2=R_1(E_1)$ and
applying Lemma \ref{re-rp} again, we can repeat the previous step to define a function $u_2$ defined in $D^c$, which satisfies the relation
\ben
u_2(x):=R_1^o[\mathcal{D}^{(1)}_\omega u_1](x)=R_1[u_1(R_1x)], \quad x\in E_2
\enn
and the Navier equation in $E_2$ with vanishing Dirichlet data on $\partial E_2$. Moreover, we can find a point $P_2:=\pi(t_2)\neq P_1$ for some $t_2>t_1$ and
a line segment $l_2\subset \partial E_2$ such that $P_2\in l_2$ and $E_2\cap\{\pi(t): t>t_2\}=\emptyset$.

After a finite number of steps, we find a polygonal domain $E_N$, $N \geq 1$, and a function
\be \label{func}
u_N(x):=[R_{N-1}^o \mathcal{D}_\omega^{(N-1)} \, R_{N-2}^o \mathcal{D}_\omega^{(N-2)} \, ... \, R_{0}^o \mathcal{D}_\omega^{o}u](x), \; x \in D^c
\en
such that
\ben
u_N(x):=\tilde{R}_N^ou(\tilde{R}_Nx),\quad x\in E_N,\quad \tilde{R}_N^o:= R^o_{N-1} R^o_{N-2} ... R^o_{0}, \quad \tilde{R}_N:= R_{0} R_{1} ... R_{N-1}
\enn
and
\ben
\LL_\omega u_N=0\quad\mbox{in}\quad E_N,\qquad u_N=0\quad\mbox{on}\quad \partial E_N.
\enn
Moreover, we may suppose $\{\pi(t): t>t_{N}\}\cap E_N=\emptyset$ for some $t_{N}>t_{N-1}$ and that $P_{N}=\pi(t_{N})\in l_{N}$, where $l_{N}\subset \partial E_N$ is a line segment.
Since the path $\pi(t)$ is connected to infinity in $D^c$, by Lemma \ref{infty} below we can assume that $\mbox{dist}(E_{N}, \partial D)\gg 1$. Moreover, we can suppose
that there is a line segment $l\subset \partial E_N$ whose maximal extension $L$ in $D^c$ does not intersect $\partial D$. This follows from the fact that $\partial E_{N}$ always contains at least
two segments forming a positive angle $\leq \pi/2$ which is bounded from zero uniformly in $N$.

The property of $L$ together with the relation (\ref{func}) implies that the functions $u_N$ and $u_{N-1}$ are well defined in an unbounded domain containing the half-plane with
the boundary $L_N$, being the extension of $l_N\subset E_N=R_N(E_{N-1})$ in $D^c$.
%
Now, applying
Lemma \ref{re-rp} enables us to extend $u_N$ and $u_{N-1}$ into the whole plane as a solution of the Navier equation (set $u=u_{N-1}$, $w=u_N$ and $D^+=E_{N-1}$, $D^-=E_N$ and $\gamma=l_{N-1}$ in Lemma \ref{re-rp}).
The analytical extension of $u_{N-1}$ in turn implies that $u_n$ ($0<n\leq N-2$) and, in particular, $u_0=u|_{E_0}$ can be extended onto $\R^2$ as well.  In fact, this can be proved in the
same manner as in the proof of Lemma \ref{re-rp} (ii).

Hence, the scattered field $u^{sc}$ can be extended onto $\overline{D}$ as an entire Kupradze radiation solution, implying that $u^{sc}\equiv0$ in $\R^2$. {\color{rot}{
This implies $u^{in}=0$ on $\partial D$ due to the boundary condition of the total field $u$. Hence,
\ben
|c_p|+|c_s|=|d\cdot u^{in}(x)|+|d^\perp\cdot u^{in}(x)|=0,\qquad x\in \partial D,
\enn
which contradicts the assumption that $|c_p|+|c_s|\neq 0$.}}
%
\end{proof}
In the proof of Lemma \ref{TH4} we need the following result.
\begin{lemma}\label{infty}
Let $P_n=\pi(t_n)\in l_n\cap D^c$ with $t_{n+1}>t_n\, (n=0, 1,\cdots)$  be the points constructed in the proof of Lemma \ref{TH4}. We have
\ben
\lim_{n\rightarrow\infty}\mbox{dist}\, (P_n, \partial D)=\infty.
\enn
\end{lemma}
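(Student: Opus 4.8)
The plan is to argue by contradiction from the assumption that the sequence $\{\mathrm{dist}(P_n,\partial D)\}$ stays bounded, so that after passing to a subsequence the reflected polygonal domains $E_n$ accumulate near $\partial D$. The key geometric observation is that the path $\pi(t)$ is, by construction, a fixed continuous injective curve leading to infinity in $D^c$, while each point $P_n=\pi(t_n)$ with $t_n$ strictly increasing lies on $\partial E_n$; the domains $E_n$ are obtained by successive reflections $E_{n+1}=R_n(E_n)$ across segments $l_n\subset\partial E_n$. If the $P_n$ remained within a bounded distance of $\partial D$, then since $D$ is bounded the whole sequence $P_n$ would lie in a fixed bounded set, and since the $t_n$ are increasing and $\pi$ is injective with $\pi(t)\to\infty$, only finitely many $t_n$ can have $\pi(t_n)$ in that bounded set — contradiction. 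So the heart of the matter is to show the $t_n$ really do increase in a controlled way and cannot simply be squeezed into a bounded parameter interval.

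First I would recall the defining property of $t_n$: $t_n=\sup\{t: E_n\cap\pi(t)\neq\emptyset\}$ (or the analogous supremum at each stage), $P_n=\pi(t_n)$, and $E_n\cap\{\pi(t):t>t_n\}=\emptyset$. Because $E_{n+1}=R_n(E_n)$ and $R_n$ is a reflection fixing the segment $l_n$ through $P_n$, the part of the path beyond $P_n$ that was previously ``outside'' $E_n$ (on the far side of $l_n$) now lies, after reflection, inside $E_{n+1}$ near $P_n$; this forces $E_{n+1}$ to meet $\pi(t)$ for some $t>t_n$, hence $t_{n+1}>t_n$. The next step is the uniform lower bound: the reflection at each stage moves the domain across a segment of $\partial E_n$, and because all these polygons are rigid images of the finitely many segments of the original $\partial D$ and $l_0$, the angles between adjacent segments of $\partial E_n$ are bounded below (they take values in a finite set determined by $\partial D$), and the segment lengths are bounded below by the minimal edge length of $\partial D$. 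This geometric rigidity is exactly what is invoked in the proof of Lemma~\ref{TH4} ("$\partial E_N$ always contains at least two segments forming a positive angle $\leq\pi/2$ which is bounded from zero uniformly in $N$"), and the same uniformity should give that each reflection step advances the path parameter or the spatial position by a definite amount, or at least that the $P_n$ cannot cluster.

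The cleanest route, I expect, is the following. Suppose $\mathrm{dist}(P_n,\partial D)\le M$ for infinitely many $n$. Then all these $P_n$ lie in the bounded set $K:=\{x: \mathrm{dist}(x,\partial D)\le M\}$. But $P_n=\pi(t_n)$ with $t_n$ strictly increasing to some limit $t_\ast\in(0,\infty]$. If $t_\ast<\infty$, then $P_n\to\pi(t_\ast)$, and one derives a contradiction from the fact that the reflections $R_n$ are uniformly ``non-degenerate'' near that accumulation point: infinitely many distinct reflection lines $L_n$ would have to pass through an arbitrarily small neighborhood of $\pi(t_\ast)$ at angles from a finite set, which is impossible because consecutive domains $E_n,E_{n+1}$ overlap on $l_n$ and the construction chose $P_n$ to be an interior (non-corner) point of a segment of length bounded below — so the $l_n$ cannot all crowd into a tiny ball. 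If $t_\ast=\infty$, then $\pi(t_n)\to\infty$ since $\pi(t)\to\infty$ as $t\to\infty$, directly contradicting $P_n\in K$. Either way we are done.

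The main obstacle, I anticipate, is making the $t_\ast<\infty$ case rigorous: ruling out that the path parameter $t_n$ converges to a finite value while the geometry keeps producing new valid reflection steps. This requires carefully combining the uniform lower bounds on edge lengths and angles of $\partial E_n$ with the supremum definition of $t_n$ and the non-corner genericity of $P_n$, to show each step advances $t$ by an amount that does not shrink to zero too fast — equivalently, that the path must exit each reflected domain across a ``fresh'' edge rather than spiraling indefinitely. I would handle this by tracking the spatial displacement $|P_{n+1}-P_n|$ rather than the parameter increment directly: each reflection across an edge of length $\ge \ell_0>0$ at an angle bounded away from $0$ and $\pi$ moves the relevant portion of $\partial E_n$ by at least a fixed fraction of $\ell_0$, so $|P_{n+1}-P_n|$ cannot tend to $0$, and hence $P_n$ cannot converge, forcing $P_n\to\infty$ and $\mathrm{dist}(P_n,\partial D)\to\infty$.
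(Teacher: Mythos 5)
Your overall skeleton matches the paper's: argue by contradiction, pass to a subsequence with $P_n\to P^*$ and $t_n\to t^*$, dispose of the case $t^*=\infty$ using $|\pi(t)|\to\infty$, and isolate $t^*<\infty$ as the hard case. But your resolution of that hard case has a genuine gap. You offer two mechanisms: (a) that infinitely many reflection lines $L_n$ with angles from a finite set cannot all pass through a small neighbourhood of $P^*$, and (b) that $|P_{n+1}-P_n|$ stays bounded below by a fixed fraction of the minimal edge length. Neither holds. In the dangerous scenario the segments $l_{N-1}, l_N, l_{N+1},\dots$ are successive edges emanating from a single common corner (which one may place at the origin) at angles $-\phi_0, 0, \phi_0, 2\phi_0,\dots$; infinitely many such segments pass through any neighbourhood of that corner, and since a reflection fixes its axis pointwise, points near the axis are moved arbitrarily little, so $P_{n+1}$ can be as close to $P_n$ as you like once both sit near the corner. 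The path can in principle spiral toward the corner with $|P_{n+1}-P_n|\to 0$; the uniform lower bounds on edge lengths and angles alone do not rule this out.

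The paper's actual mechanism, which is the missing idea, is a winding argument combined with the injectivity of $\pi$. Once the $P_j$ cluster, $l_{N-1}$ and $l_N$ must be adjacent edges of $E_N$ meeting at a corner with interior angle $\phi_0>0$; each subsequent reflection rotates the configuration about that corner by $\phi_0$, so $l_{N+j}$ lies on the ray at angle $j\phi_0$, and after $J=\inf\{j: j\phi_0\ge 2\pi\}$ steps the domain $E_{N+J}$ has wrapped all the way around and meets $l_N$ again. Injectivity of the path then forces the arc $\{\pi(t): t\in(t_{N+J-1},t_{N+J}]\}$ to cross $l_N$ between the corner and $P_N$, i.e.\ to re-enter $E_N$ at some time $t>t_N$, contradicting the defining property $E_N\cap\{\pi(t):t>t_N\}=\emptyset$. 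It is this topological wrap-around after a uniformly bounded number of reflections, not a metric lower bound on the step size $|P_{n+1}-P_n|$, that kills the accumulation; you would need to supply this (or an equivalent) to close the argument.
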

\begin{proof} We keep the notation used in the proof of Lemma \ref{TH4}.
Suppose on the contrary that  $\mbox{dist}(P_n, \partial D)<\infty$ as $n\rightarrow\infty$.  We can always choose a subsequence, which we still denote by $P_n$, such that $P_n\rightarrow P^*$ for some $P^*$ and $t_n\rightarrow t^*$ as $n\rightarrow\infty$, where $t^*<\infty$.  Note that, if $t^*=\infty$, one can see that $\lim_{n\rightarrow\infty}\pi(t_n)<\infty$, contradicting the fact that
$\pi(t)$ is connected to infinity.

Further, we may suppose that there exists $N>0$ such that
\be\label{eq:P}
|P^*P_j|<\epsilon,\quad
|t_j -t^*|<\epsilon, \quad P_j\neq P_{j+1}\qquad\mbox{for all}\quad j\geq N-1.
\en
Since $\epsilon>0$ can be arbitrarily small, this implies that $l_{N-1}$ and $l_N$ must be two neighboring line segments lying on the boundary of the polygonal domain $E_N$. Without loss of generality, we assume that the corner point $l_{N-1} \cap l_N$ coincides with the origin and that
\ben
l_{N-1}\subset\{(r,\theta): \theta=- \phi_0\},\quad
l_N\subset\{(r,\theta):\theta=0\}
\enn
for some $\phi_0\in(0, 2\pi)$, where $(r, \theta)$ denote the polar coordinates.  Let $J=\inf_{j\geq 0}\{j\phi_0\geq 2\pi\}$.
It follows from (\ref{eq:P}) that
\[P_{N+j}=\pi(t_{N+j})\in l_{N+j}\subset \{(r,\theta): \theta=j\phi_0\}
\] for all $1\leq j\leq J$. Then, after the $J$-th reflection we have $l_{N}\cap \overline{E}_{N+J}\neq \emptyset$ and thus there exists $t_{N+J}^*$ such that
$\pi(t_{N+J}^*)=P_{N+J}^*\in\{\pi(t): t\in(t_{N+J-1}, t_{N+J}]\}\cap l_{N}$. By the injectivity of the path $\pi(t)$, it follows that the arc
$\{\pi(t): t\in(t_{N+J-1}, t_{N+J})\}$ cannot intersect the arc of $\pi(t)$
for $t\in (t_{N-1}, t_N)$. Therefore, the point $P_{N+J}^*\in l_N$ must lie between the origin and $P_N\in l_N$, that is,  $ |OP_{N+J}^*|<|OP_N|$. Hence, we obtain
\ben
E_N\cap\{\pi(t): t\in(t_{N+J-1}, t_{N+J}]\}\neq\emptyset,
\enn which contradicts the fact that
\ben
E_N\cap\{\pi(t): t>t_N\}=\emptyset.
\enn
\end{proof}

We remark that if the nodal set contains a line segment whose end points lie on $\partial D$, then a non-uniqueness example to inverse scattering can be easily constructed;
see the example at the end of this section.
Having proved the property of the nodal set in Lemma \ref{TH4}, we can verify the main uniqueness result of Theorem \ref{TH0} with a single incoming plane wave.

\textbf{Proof of Theorem \ref{TH0}}. Suppose there are two rigid polygonal obstacles $D_1$ and $D_2$ such that $u^\infty_1=u^\infty_2$ for the incoming plane wave (\ref{planewave}). Here and thereafter we denote by $u^\infty_j, u_j, u_j^{sc}$ ($j=1,2$) the far-field patterns, and the total and scattered fields corresponding to $D_j$.
By Rellich's lemma, we obtain $u_1=u_2$ in the unbounded component $E$ of $\R^2\backslash\overline{D_1\cup D_2}$.

If $D_1\neq D_2$, one can always find a finite line segment $l$ such that, without loss of generality, $l\subset \partial D_1\cap \partial E$ but $l\cap \overline{D}_2=\emptyset$. Denote by $L$ the maximum extension of $l$ in $D_2^c:=\R^2\backslash{\overline{D}}_2$. Since $u_2$ is real analytic in $D^c_2$, we get $u_2=0$ on $L$, that is, $L$ is a subset of the nodal set of $u_2$. By Lemma \ref{TH4}, $L$ cannot be a finite line segment with the end points lying on $\partial D_2$.
Hence, $L$ must be connected to infinity in $D^c_2$.  In view of the Kupradze radiation condition of $u^{sc}_2$, we get
\ben
\lim_{|x|\rightarrow\infty,\, x\in L} u_2^{sc}(x)=0,
\enn
which gives rise to the same asymptotic behavior of $u^{in}$ on $L$ and thus
\ben
|d\cdot u^{in}(x)|+|d^\perp\cdot u^{in}(x)|\rightarrow 0,\quad\mbox{as}\quad |x|\rightarrow \infty, \quad x\in L.
\enn
However, the previous relation is impossible, because
\ben
|d\cdot u^{in}(x)|+|d^\perp\cdot u^{in}(x)|=|c_p|+|c_s|\neq 0,\qquad x\in\R^2.
\enn
This contradiction implies that $D_1=D_2$.
$\hfill\Box$

\begin{remark}\label{rem:sing}
From Lemma \ref{TH4} and the proof of Theorem \ref{TH0} we conclude that
\begin{itemize}
\item[(i)] The nodal set $\mathcal{N}$ of $u$ cannot coincide with a finite line segment.
\item[(ii)] The total field $u$ must be singular at each corner point lying on the convex hull of $\partial D$. In other words, $u$ cannot be analytically extended into $D$ across a corner point of
 the convex hull of $\partial D$. This fact will be used in Section \ref{Sec:FM} below to interpret the behavior of an indicator function for imaging rigid polygonal obstacles; see Remark \ref{Re:obstacle}.
\end{itemize}
\end{remark}

For the readers' convenience,  we finally illustrate the idea in the proofs of Theorem \ref{TH0} and Lemma \ref{TH4} through a simple example.
We shall construct two concrete polygonal obstacles and show why they cannot generate identical scattering data. Let $D$ be given as in Figure \ref{F} and let the line segment $\gamma$ be part of the nodal set of $u$ corresponding to $D$ and some fixed incident plane wave. This implies that the polygonal obstacles $D$ and $\tilde{D}:=(D\cup \overline{\Omega}^+)\backslash \gamma$ would generate identical scattering data, where $\Omega^+$ is the gap domain between $D$ and $\tilde{D}$.
By the proof of Lemma \ref{TH4},  the function $u_0:=u|_{\Omega^+}$ is a solution to the Navier equation in $\Omega^+$ with vanishing Dirichlet data on $\partial \Omega^+$ and  $u_1(x):=R \mathcal{D}_\omega u(x)=Ru(Rx)$ satisfies the same boundary value problem over $\Omega^-=R(\Omega^+)$. Since $D^c$ contains the half plane $\{x_1\leq c\}$ for some $c<0$, the function $u_1$ is also well defined
over $\{x_1\leq c\}\cup \Omega^-$ and in particular, $u_1=0$ on $\{x_1=c\}$ \rot{by analyticity}. Applying the reflection principle of the Navier equation, $u_1$ can be analytically extended onto $\R^2$. This implies that $u_0$ and thus $u^{sc}$ can be also extended onto the whole space, which is impossible.
For more general configurations of two polygonal obstacles, the multiple reflection and path arguments presented in the proof of Lemma \ref{TH4} can be used to derive a contradiction.
\begin{figure}[h]
\begin{center}
\includegraphics[width=8cm]{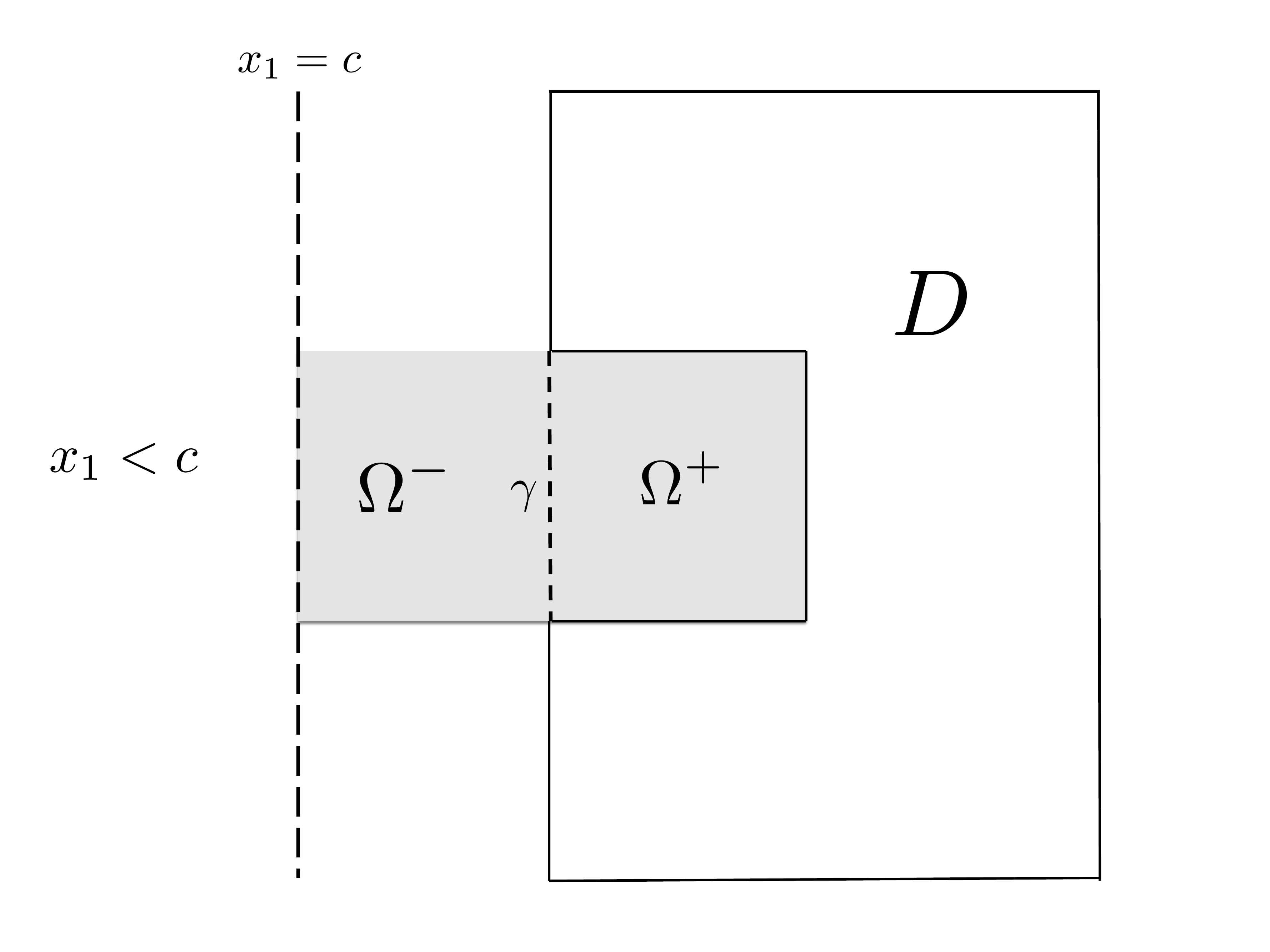}
\caption{\label{F}Illustration of the idea in the proof of Lemma \ref{TH4}: $D$ is polygonal obstacle and $\gamma\subset\{x_1=0\}$ is one line segment of the nodal set with two end points lying on $\partial D$. In this case, a contradiction can be easily deduced by the reflection with respect to $\gamma$.}
\end{center}
\end{figure}

\section{Factorization method with a single far-field pattern}\label{Sec:FM}
The aim of this section is to propose a revisited factorization method for recovering $D$ from a single far-field pattern. The original factorization method \cite{Kirsch98, Kirsch08} by A. Kirsch makes use of knowledge of far-field patterns corresponding to all incident directions. We take inspiration from a recent paper \cite{LS2018}  on an extended linear sampling method with a single plane wave and improve the analysis and the inversion scheme there within the framework of factorization method.  A comparison of our approach to \cite{LS2018}  will be given at the end of this section.
\subsection{Factorization method with infinitely many incoming directions}\label{sub:fac}
We first present a brief review of the factorization method in linear elasticity established in \cite{AK, Arens}. For $g\in L^2(\s)^2$, introduce the Herglotz operator $H: L^2(\s)^2\rightarrow H^{1/2}(\partial D)^2$ by
\be\label{Herglotz}
Hg(x)=\int_{\s}\left[ g_s(d)\,d^\perp e^{ik_s x\cdot d}+g_p(d)\,d\,e^{ik_p x\cdot d}\right]\,ds(d),\qquad x\in \partial D,
\en where $g_s(d):=g(d)\cdot d^\perp$ and $g_p(d):=g(d)\cdot d$ are the tangential and normal components of $g\in L^2(\s)^2$, respectively.
The far-field operator $F: L^2(\s)^2\rightarrow L^2(\s)^2$ is defined by
\ben
Fg(\hat{x})=\int_{\s} \left[g_s(d)\,u^\infty_s(\hat{x}, d)+ g_p(d)\,u^\infty_p(\hat{x}, d)\right]\,ds(d),\quad \hat{x}\in \s,
\enn where $u^\infty_s$ and $u^\infty_p$ are the far-field patterns incited by the incident plane wave $d^\perp \exp(ik_s x\cdot d)$ and $d \exp(ik_p x\cdot d)$, respectively.
The function $Fg(\hat{x})$ is the far-field pattern corresponding to the incident wave defined by the right hand side of (\ref{Herglotz}). It was shown in \cite[Theorem 3.3]{Arens} that $F$ is compact and normal. Denote by $\Phi_\omega$ the Green's tensor of the Navier equation in two dimensions, give by
\ben
\Phi_\omega(x, y)=\frac{1}{4\mu}H_0^{(1)}(k_s|x-y|)+\frac{i}{4\omega} \grad_x\grad_x^\perp \left[ H_0^{(1)}(k_s|x-y|)-H_0^{(1)}(k_p|x-y|) \right].
\enn
The far-field pattern of the function $x\rightarrow \Phi_\omega(x,y)P$ for some fixed polarization vector $P\in \R^2$ is given by
\ben
\Phi^\infty_{y}(\hat{x})=\frac{e^{-ik_p\hat{x}\cdot y+i\pi/4}}{\sqrt{8\pi k_p}}(\hat{x}\cdot P)\,\hat{x}+\frac{e^{-ik_s\hat{x}\cdot y+i\pi/4}}{\sqrt{8\pi k_s}}(\hat{x}^\perp\cdot P)\,\hat{x}^\perp,\quad \hat{x}\in \s. \enn
It was proved in \cite{AK} and \cite{Arens} that the function $\Phi^\infty_y$ can be used to characterize the scatterer $D$ in terms of the range of $(F^*F)^{1/4}$. Using the orthogonal system of eigenfunctions of $F$, Picard's theorem then implies the following result.
\begin{proposition}
If $\omega^2$ is not a Dirichlet eigenvalue of the operator $-\LL_0$ in $D$, then
\be\label{factorization}
y\in D \quad {\color{rot}{\mbox{if and only if} }}\quad   W(y):=\left[ \sum_{n=1}^\infty \frac{|(\phi_n, \Phi_y^\infty)_{L^2(\s)^2}|^2}{|\eta_n|}    \right]^{-1}>0,
\en where $\eta_n\in \C$ denotes the eigenvalues of $F$ with the corresponding eigenfunctions $\phi_n\in L^2(\s)^2$.
\end{proposition}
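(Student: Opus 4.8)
The plan is to deduce (\ref{factorization}) by combining the range characterization of $D$ in terms of $(F^*F)^{1/4}$ established in \cite{AK,Arens} with Picard's theorem applied to the singular system of $(F^*F)^{1/4}$; the latter is available precisely because, by \cite[Theorem 3.3]{Arens}, $F$ is compact and normal.

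First I would recall the factorization of the far-field operator. For the rigid obstacle $D$ one has a factorization of the form $F = G\,S\,G^{*}$, where $G:H^{1/2}(\partial D)^{2}\to L^{2}(\s)^{2}$ is the data-to-pattern operator sending a Dirichlet datum $f$ to the far-field pattern of the radiating solution of $\LL_\omega v=0$ in $D^{c}$ with $v=f$ on $\partial D$, and $S$ is the middle operator built from the single-layer type operator associated with the Lam\'e system. The hypothesis that $\omega^{2}$ is not a Dirichlet eigenvalue of $-\LL_{0}$ in $D$ is exactly what makes this factorization fit the hypotheses of Kirsch's range identity (injectivity of the relevant boundary operator, coercivity of $S$ up to a compact perturbation and a unimodular factor). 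Invoking that identity in its form for normal operators — which is why the fourth root of $F^{*}F$ appears rather than $F$ itself — yields
\[
\mathrm{Range}\big((F^{*}F)^{1/4}\big)=\mathrm{Range}(G).
\]
Next I would identify $\mathrm{Range}(G)$ with $D$ via the test function $\Phi_y^\infty$. If $y\in D$, then $x\mapsto\Phi_\omega(x,y)P$ solves the Navier equation in $D^{c}$, is a Kupradze radiating solution, and its restriction to $\partial D$ is an admissible Dirichlet datum whose far-field pattern is exactly $\Phi_y^\infty$; hence $\Phi_y^\infty\in\mathrm{Range}(G)$. Conversely, if $y\notin D$ and $\Phi_y^\infty=Gf$ for some $f$, then the radiating solution with datum $f$ and $\Phi_\omega(\cdot,y)P$ share the same far-field pattern, so by Rellich's lemma and unique continuation they coincide in $D^{c}\setminus\{y\}$, forcing the smooth field to inherit the singularity of the fundamental solution at $y$ — a contradiction (a boundary-regularity variant of the same argument handles $y\in\partial D$). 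Therefore $\Phi_y^\infty\in\mathrm{Range}((F^{*}F)^{1/4})$ if and only if $y\in D$.

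Finally I would translate the range membership into convergence of the series. Since $F$ is compact and normal, its eigenfunctions $\phi_{n}$ form an orthonormal basis of $\overline{\mathrm{Range}(F)}$ and $(F^{*}F)^{1/4}$ is the self-adjoint positive operator with eigenvalues $|\eta_{n}|^{1/2}$ and the same eigenfunctions, so its singular system is $(|\eta_{n}|^{1/2},\phi_{n},\phi_{n})$. Picard's theorem then states that $\Phi_y^\infty$ lies in $\mathrm{Range}((F^{*}F)^{1/4})$ exactly when
\[
\sum_{n=1}^{\infty}\frac{|(\phi_{n},\Phi_{y}^{\infty})_{L^{2}(\s)^{2}}|^{2}}{|\eta_{n}|}<\infty,
\]
which, with the convention $1/\infty=0$ in the definition of $W(y)$, is equivalent to $W(y)>0$. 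Chaining this with the previous step gives (\ref{factorization}). I expect the main obstacle to be the careful verification that the abstract range identity of \cite{Kirsch98,Kirsch08} applies to the elastic far-field operator under the stated eigenvalue hypothesis, i.e. that the middle operator $S$ has the coercivity properties required in the normal-operator version of the identity; since this is the content quoted from \cite{AK,Arens}, in the write-up it can be cited rather than reproved, and the remaining steps (the $\Phi_y^\infty$ test-function argument and Picard's theorem) are routine.
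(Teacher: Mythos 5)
Your proposal is correct and follows essentially the same route as the paper, which likewise rests on the range identity $\mathrm{Range}\bigl((F^*F)^{1/4}\bigr)=\mathrm{Range}(G)$ quoted from \cite{AK,Arens}, the characterization of $y\in D$ via the test far-field $\Phi_y^\infty$, and Picard's theorem applied to the eigensystem of the compact normal operator $F$. You simply spell out the standard intermediate steps (the factorization, the Rellich/singularity argument, and the singular system $(|\eta_n|^{1/2},\phi_n,\phi_n)$) that the paper leaves to the cited references.
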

We refer to \cite{HKS13} for the factorization method using the shear (resp. compressional) part of the far-field pattern corresponding to all incident shear (resp. compressional) plane waves with all directions. We remark that the right hand side of (\ref{factorization}) is the inverse of the $L^2$-norm of the solution $g$ to the operator equation
\ben
(F^*F)^{1/4} g=\Phi_y^\infty.
\enn In fact, the above equation is solvable (that is, $\Phi_y^\infty\in\mbox{Range}(F^*F)^{1/4}$) if and only if $y\in D$, and the unique solution is given by
\ben
g(\hat{x})=\sum_{n=1}^\infty \frac{(\phi_n, \Phi_y^\infty)_{L^2(\s)^2}}{\sqrt{|\eta_n|}} \phi_n(\hat{x}) ,\qquad y\in D.
\enn

\subsection{Factorization method with a single incoming wave}
Assume that the unknown rigid scattered $D$ is contained in $B_R=\{x: |x|<R\}$ and that $D$ is connected. We want to recover $\partial D$ from a single far-field pattern $u^\infty(\hat{x})$ generated by one incoming elastic plane wave of the form (\ref{planewave}) with the fixed direction $d\in \s$ and frequency $\omega\in \R^+$.

Let $z=R(\cos\theta, \sin\theta)\in \Gamma_R=\partial B_R$ and let $B_h(z)=\{x\in \R^2: |x-z|=h\}$ be a disk with radius $h>0$ centered at $z$. For simplicity we write $B_h(z)=B_{h, \theta}$ where $\theta\in[0, 2\pi)$ and $h\in(0, 2R]$ will be called the sampling variables.
Suppose that $B_{h,\theta}$ is a rigid disk, and denote by $F_{h,\theta}$ the far-field operator associated with $B_{h,\theta}$. Consider the operator equation
\be\label{F*}
(F^*_{h,\theta}F_{h,\theta})^{1/4} \,g=u^\infty.
\en
We want to characterize $D$ through the solution $g=g_{h,\theta}$ of the above operator equation for all sampling variables $h$ and $\theta$. To introduce our indicator function, we need to
define the minimum and maximum distance between $z$ and $\partial D$ by
\ben
l_z=\mbox{dist}(z, \partial D)=\min_{x\in \partial D}|x-z|,\qquad L_z:=\max_{z\in\partial D}|x-z|.
\enn
Below we adapt the arguments of Subsection \ref{sub:fac} to the solvability of (\ref{F*}) .
\begin{theorem}
Let $z=R(\cos\theta,\sin\theta)$ and $h\in(0, 2R]$ be fixed, and suppose that $\omega^2$ is not a Dirichlet eigenvalue of the operator $-\LL_0$ in $B_{h,\theta}$. Denote by $(\eta_n^{(h,\theta)}, \phi_n^{(h,\theta)})_{n=1}^{\infty}$ the eigensystem of the normal operator $F_{h,\theta}$. Define the function
\be\label{W}
W(h,\theta):=\left[ \sum_{n=1}^\infty \frac{|(\phi_n^{(h,\theta)}, u^\infty)_{L^2(\s)^2}|^2}{|\eta_n^{(h,\theta)}|}    \right]^{-1}.
\en
\begin{itemize}
\item[(i)] If $h\in [L_z, 2R]$, then the operator equation (\ref{F*}) is uniquely solvable, with the solution given by
\ben
g_{h,\theta}(\hat{x})=\sum_{n=1}^\infty \frac{(\phi^{h,\theta}_n, u^\infty)_{L^2(\s)^2}}{|\eta_n^{h,\theta}|^{1/2}} \phi^{h, \theta}_n(\hat{x}),\qquad \hat{x}\in \s.
 \enn Further, it holds that
 $||g_{h,\theta}||_{L^2(\s)^2}=W(h,\theta)>0$.
 \item[(ii)]  If $h\in (0, l_z]$, then the operator equation (\ref{F*}) has no solution in $L^2(\s)^2$ and $W(h,\theta)=0$.
 \item[(iii)] Let $h\in(l_z, L_z)$, and let $u$ be the total field corresponding to the scatterer $D$. If $u$ can be analytically extended from $D^c$ to $D\backslash\overline{B_{h,\theta}}\neq\emptyset$, then (\ref{F*}) is uniquely solvable and $W(h, \theta)>0$. Otherwise, we have $W(h, \theta)=0$.
\end{itemize}
\end{theorem}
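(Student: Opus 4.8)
The theorem is the single-wave analogue of the classical factorization characterization (Proposition in Subsection~\ref{sub:fac}), so the plan is to reduce each of the three cases to the solvability of $(F_{h,\theta}^*F_{h,\theta})^{1/4}g=u^\infty$ via Picard's theorem, and to translate the abstract range condition $u^\infty\in\mathrm{Range}\,(F_{h,\theta}^*F_{h,\theta})^{1/4}$ into a geometric statement about whether the total field $u$ (equivalently, the scattered field $u^{sc}$) extends as a Kupradze radiating solution of the Navier equation across the relevant region. The key algebraic fact, already available from the infinitely-many-directions theory of \cite{AK,Arens}, is the range identity
\[
\mathrm{Range}\,(F_{h,\theta}^*F_{h,\theta})^{1/4}=\mathrm{Range}\,G_{h,\theta},
\]
where $G_{h,\theta}$ maps (a class of) radiating solutions defined outside $B_{h,\theta}$ to their far-field patterns; since $\omega^2$ is not a Dirichlet eigenvalue of $-\LL_0$ in $B_{h,\theta}$, the operator $G_{h,\theta}$ is injective with dense range, and $u^\infty$ lies in its range precisely when $u^{sc}$ coincides in $\mathbb{R}^2\setminus\overline{B_R}$ with a radiating solution of the Navier equation in $\mathbb{R}^2\setminus\overline{B_{h,\theta}}$. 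Once this is set up, the Picard series $\sum_n |(\phi_n^{(h,\theta)},u^\infty)|^2/|\eta_n^{(h,\theta)}|$ converges iff $u^\infty$ is in the range, and its reciprocal equals $\|g_{h,\theta}\|^2_{L^2(\s)^2}$ when the solution exists; this gives the stated identity $\|g_{h,\theta}\|=W(h,\theta)$ and the dichotomy $W>0$ versus $W=0$.

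\textbf{Case (i): $h\in[L_z,2R]$.} Here $D\subset\overline{B_{h,\theta}}$, so the scattered field $u^{sc}$ is by definition a radiating solution of the Navier equation in $\mathbb{R}^2\setminus\overline{D}\supset\mathbb{R}^2\setminus\overline{B_{h,\theta}}$. Hence $u^\infty$ is automatically the far-field pattern of a radiating solution outside $B_{h,\theta}$, i.e. $u^\infty\in\mathrm{Range}\,G_{h,\theta}$, so the range condition holds and $(F_{h,\theta}^*F_{h,\theta})^{1/4}g=u^\infty$ is (uniquely, by injectivity of $G_{h,\theta}$) solvable with the series representation stated. Then $W(h,\theta)=\|g_{h,\theta}\|^2>0$, and it is strictly positive because $u^\infty\not\equiv0$ (otherwise Rellich plus the boundary condition would force $|c_p|+|c_s|=0$ on $\partial D$ as in the proof of Lemma~\ref{TH4}).

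\textbf{Case (ii): $h\in(0,l_z]$.} Now $B_{h,\theta}\cap\overline{D}=\emptyset$. If the range condition held, $u^{sc}$ would extend as a radiating solution of the Navier equation into $\mathbb{R}^2\setminus\overline{B_{h,\theta}}$, hence across all of $\overline{D}$ except the small disk $B_{h,\theta}$, which is disjoint from $D$; combined with the original definition of $u^{sc}$ in $D^c$ this would make $u^{sc}$ an entire Kupradze radiating solution, forcing $u^{sc}\equiv0$ and again contradicting $|c_p|+|c_s|\neq0$ through the boundary condition. Therefore the range condition fails, (\ref{F*}) has no solution, the Picard series diverges, and $W(h,\theta)=0$.

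\textbf{Case (iii): $h\in(l_z,L_z)$.} This is the delicate case and the main obstacle. The disk $B_{h,\theta}$ now genuinely overlaps $D$ but does not contain it, so $D\setminus\overline{B_{h,\theta}}\neq\emptyset$, and $u^\infty\in\mathrm{Range}\,G_{h,\theta}$ iff $u^{sc}$ (hence $u$) continues as a solution of the Navier equation into $D\setminus\overline{B_{h,\theta}}$. If such an analytic extension exists, the range condition holds, (\ref{F*}) is uniquely solvable and $W(h,\theta)>0$; if not, the range condition fails and $W(h,\theta)=0$. The technical points to be handled carefully are: first, that the extension---if it exists at all---is necessarily a radiating solution of the Navier equation (this follows from unique continuation for $\LL_\omega$ and from the fact that $u^{sc}$ already satisfies the radiation condition in $D^c$), so that ``analytically extendable across $D\setminus\overline{B_{h,\theta}}$'' is exactly equivalent to the abstract range membership; and second, that $u^{sc}\not\equiv 0$ so that $W(h,\theta)>0$ rather than merely $\geq 0$ whenever the extension exists, which again uses $|c_p|+|c_s|\neq0$. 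The results of Section~\ref{sec:3}---in particular Remark~\ref{rem:sing}(ii), that $u$ must be singular at each corner point of the convex hull of $\partial D$---guarantee that this intermediate regime is nonempty and that $W(h,\theta)$ cannot remain positive all the way down to $h=l_z$, which is what makes the indicator function detect $\partial D$; the precise interface between the ``extendable'' and ``non-extendable'' sub-regimes is determined by the reflection/extension machinery of Section~\ref{sec:2}.
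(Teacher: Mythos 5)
Your proposal is correct and follows essentially the same route as the paper: it introduces the data-to-pattern operator $G_{h,\theta}$, invokes the range identity $\mathrm{Range}\,(F_{h,\theta}^*F_{h,\theta})^{1/4}=\mathrm{Range}\,G_{h,\theta}$ from \cite{AK,Arens}, applies Picard's theorem, and converts range membership into analytic extendability of $u^{sc}$ past $\overline{B_{h,\theta}}$, with the entire-radiating-solution argument (and $|c_p|+|c_s|\neq 0$) ruling out solvability in cases (ii) and the non-extendable part of (iii). The only cosmetic discrepancy is the relation between $W$ and $\|g_{h,\theta}\|_{L^2(\s)^2}$, which by the definition of $W$ should read $W=\|g_{h,\theta}\|^{-2}$ rather than $\|g_{h,\theta}\|^{2}$ --- an inconsistency already present in the paper's own statement.
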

\begin{proof}
Let $G_{h,\theta}: H^{1/2}(\partial B_{h,\theta})^2\rightarrow L^2(\s)^2$ be the data-to-pattern operator defined by $G_{h,\theta}(f)=v^\infty$, where $v^{\infty}$ is the far-field pattern of the scattered field $v^{sc}$ to the boundary value problem
\ben
\LL_\omega v^{sc}=0\quad\mbox{in}\quad \R^2\backslash\overline{B_{h,\theta}},\qquad v^{sc}=f\in H^{1/2}(\partial B_{h,\theta})^2.
\enn It is well known from \cite{AK, Arens} that the ranges of $G_{h,\theta}$ and $(F_{h,\theta}^*F_{h,\theta})^{1/4}$ coincide. In case (i), it is easy to see
\be\label{range}
u^{\infty}=G_{h,\theta}(f)\in \mbox{Range} (G_{h,\theta}),\quad\mbox{where}\quad f:=u^{sc}|_{\partial B_{h,\theta}}\in H^{1/2}(\partial B_{h,\theta})^{2},
\en and hence $u^\infty$ belongs to the range of $(F_{h,\theta}^* F_{h,\theta})^{1/4}$. By Picard's theorem, one obtains the results in the first assertion.
If $h\in(l_z, L_z)$ and $u$ can be analytically extended into $D\backslash \overline{B_{h,\theta}}$,  the scattered field $u^{sc}$ can be analytically continued to the domain $|x-z|>h$. This implies that we
have the relation (\ref{range}) again. In case (ii), it holds that $u^\infty\notin \mbox{Range}(G)$, because $u^{sc}$ cannot be analytically extended onto $\overline{D}$ as an entire
Kupradze radiating solution. The second part in the third assertion can be proved similarly.
\end{proof}
Write $z=z(\theta)\in \Gamma_R$ for $\theta\in[0,2\pi)$. Theorem 4.2 suggests the following indicator function for imaging the scatterer $D$:
\be\label{indicator}
I(y)=\left(\int_{0}^{2\pi}W(|y-z(\theta)|, \theta)\,d\theta\right)^{-1},\quad y\in B_R,
\en
where $W(h,\theta)$ is defined via (\ref{W}).
If $u$ cannot be extended into $D$ across any sub-boundary of $\partial D$, it holds that $W(|y-z(\theta)|,\theta)>0$ for all $y\in B_R$ such that $|y-z(\theta)|>L_z$, and
$W(|y-z(\theta)|,\theta)=0$ if $|y-z(\theta)|<L_z$. Therefore, the function $y\rightarrow W(|y-z(\theta)|,\theta)$ provides an estimate of the maximum distance between $z$ and $\partial D$ for fixed $\theta$. When the sampling variable $\theta$ varies in the whole interval $[0, 2\pi)$,
it is expected that $I(y)$ takes much larger values for $y\in D$ than for $y\in D^c$.
\begin{remark}\label{Re:obstacle}
If $D$ is a convex polygonal obstacle, it follows from Remark \ref{rem:sing}  (ii) that $u$ cannot be analytically continued across any corner of $\partial D$.
Hence, the indicator function (\ref{indicator}) could be used, in particular, for capturing a corner point of $\partial D$. If $D$ is a non-convex polygon, then the convex hull of $D$ can be efficiently recovered. The above scheme also applies to inverse scattering from penetrable scatterers and to inverse source problems. In the acoustic case, it was proved in \cite{BLS, PSV, ElHu2015, EH2018, LHY2018} that $u$ cannot be extended into $D$ across a strongly or weakly singular point of $\partial D$, that is, corners and weakly singular boundary points always scatter. Analogous results
in elastic scattering remain open, but similar conclusions can be expected.  Hence, the proposed numerical scheme can be utilized to recover boundary singular points of  penetrable and impenetrable scatterers.
\end{remark}

The authors in \cite{LS2018} proposed an extended linear sampling method for recovering $\partial D$ from a single acoustic far-field pattern. The idea there is to consider the solvability of the first kind integral equation
\be\label{LSM}
F_z g=u^\infty,
\en
where $F_z$ is the far-field operator corresponding to a sound-soft disk $|x-z|=a$ for some fixed $a>0$. Since the above equation is ill-posed, a regularization method must be used for solving (\ref{LSM}).  On the other hand, a multi-level sampling scheme was employed to find a proper radius of the sampling disk. In this paper, we have rigorously analyzed the solvability of the equation (\ref{F*}) within the framework of factorization method and have designed new sampling and imaging schemes, which avoid the multi-level sampling in
\cite{LS2018}.  In comparison with the linear sampling and factorization methods with all incident directions, the essential idea of \cite{LS2018} and this paper is to make
 use of the scattered data from an admissible set of known obstacles. Such an admissible set is taken as the set of
sampling disks  $B_{h,\theta}$ for all $h\in(0,2R)$, $\theta\in[0,2\pi)$ in this paper, and  was chosen to consist of $B_a(z)$ for all $z\in B_R$ with a fixed sampling radius $a>0$ in \cite{LS2018}.

The advantages of our inversion scheme can be summarized as follows. Firstly, the functions $W(h,\theta)$ and $I(y)$ involve only inner product calculations with low computational cost,
because
the spectrum $(\eta_n^{h,\theta}, \phi_n^{h,\theta})$ of the far-field operator for the disk $B_{h,\theta}$ can be obtained explicitly in elasticity.  For obstacles from other admissible set,
the spectrum of the corresponding far-field operator
can be obtained in advance.

Secondly, the spectral systems corresponding to a priori given obstacles from the admissible set can be replaced by other virtual systems which mathematically make sense.
For example, in the case of near-field measurement data and for time-dependent scattering problems, the original version of the factorization methods (see \cite{CHL18, Kirsch08})  involves physically non-meaningful incoming waves. However, the resulting far-field operators are still meaningful from the mathematical point of view. Therefore,
the revisited factorization method described here applies to these cases. 
There is also a variety in the choice of the shape and the boundary conditions of the scatterers in the admissible set.

{\color{rot}{Thirdly, the proposed inversion scheme may be applied to other shape identification problems for imaging penetrable and
impenetrable scatterers with a single incoming wave, including inverse source problems. However, the theoretical justification
of the non-analytical extension across a corner domain in linear elasticity seems more
 challenging than its acoustic counterpart.}}
Numerical results for inverse acoustic scattering problems with near-field and far-field data and further comparison with other sampling methods will be reported in forthcoming papers.

\section{Acknowledgements}
The first author gratefully acknowledges the support of the Computational Science Research Center in Beijing and the School of Mathematical Sciences
of the Fudan University in Shanghai during his stay in October of 2018.
The work of the second author is supported by NSFC grant No. 11671028 and NSAF grant No. U1530401.

\end{document}